\theoremstyle{theorem}
\newtheorem{theorem}{Theorem}[section]%to acne stevilciti vsakic ko je nov section
\newtheorem{lemma}[theorem]{Lemma}
\newtheorem{corollary}[theorem]{Corollary}
\newtheorem{proposition}[theorem]{Proposition}
\theoremstyle{definition}
\newtheorem{definition}{Definition}
\newtheorem{remark}{Remark}
\newtheorem{example}{Example}
\newtheorem{construction}{Construction}
\newcommand{\spa}{\operatorname{span}}
\renewcommand{\AA}{\mathbb A}
\newcommand{\LL}{\mathbb L}
\newcommand{\RR}{\mathbb R}
\newcommand{\NN}{\mathbb N}
\newcommand{\ZZ}{\mathbb Z}
\newcommand{\PP}{\mathbb P}
\newcommand{\spec}{\mathrm{Spec}}
\newcommand{\lan}{\langle}
\newcommand{\ran}{\rangle}
\newcommand{\lam}{\lambda}
\newcommand{\h}{\operatorname{Hom}}
\newcommand{\tot}{\operatorname{tot}}
\newcommand{\im}{\operatorname{im}}
\renewcommand{\sp}{\operatorname{Span}}
\newcommand{\HH}{\operatorname{HH}}
\newcommand{\un}{\underline}
\newcommand{\sgn}{\operatorname{sgn}}
\newcommand{\ffi}{\varphi}
\newcommand{\kss}{\scriptscriptstyle}
\newcommand{\kbb}{{\kss \bullet}}
\begin{document}

\title[The Gerstenhaber product of affine toric varieties]{The Gerstenhaber product $\HH^2(A)\times \HH^2(A)\to \HH^3(A)$ of affine toric varieties} 
\author{Matej Filip}
\address{JGU Mainz, Institut fur Mathematik, Staudingerweg 9, 55099 Mainz, Germany}
\email{mfilip@uni-mainz.de}

\begin{abstract}
For an affine toric variety $\spec(A)$, we give a convex geometric interpretation of the Gerstenhaber product $\HH^2(A)\times \HH^2(A)\to \HH^3(A)$ between the Hochschild cohomology groups. In the case of Gorenstein toric surfaces we prove that the Gerstenhaber product is the zero map.
As an application in commutative deformation theory we find the equations of the versal base space (in special lattice degrees) up to second order  for not necessarily isolated toric Gorenstein singularities. Our construction reproves and generalizes results obtained in \cite{alt2} and \cite{sle}.

\end{abstract}

\keywords{Toric varieties, Hochschild cohomology, Harrison cohomology, Gerstenhaber product}
\subjclass[2010]{13D03, 13D10, 14B05, 14B07, 14M25}

\maketitle

\section{Introduction}

It is well known that non-commutative deformations of an affine variety $X=\spec(A)$ are controlled by the Hochschild differential graded Lie algebra (dgla for short). There are two important $A$-modules:  
the second Hochschild cohomology group $\HH^2(A)$ that describes the first order deformations and the third Hochschild cohomology group $\HH^3(A)$ that contains the obstructions for extending deformations of $X$ to larger base spaces.

\begin{comment}
The Lie bracket of the Hochschild dgla (also called the Gerstenhaber bracket) induces the bracket in cohomology. 
In particular it induces the bracket $\HH^2(A)\times \HH^2(A)\to \HH^3(A)$.
\end{comment}

$\HH^2(A)$ can be decomposed as $H^2_{(1)}(A)\oplus H^2_{(2)}(A)$, where $H^2_{(1)}(A)$ describes the first order commutative deformations. Moreover, there exists the Harrison dgla, that is a sub-dgla of the Hochschild dgla, controlling the commutative deformations of $X$.

Focusing on commutative deformations, computing the versal deformation of affine varieties with isolated singularities is a challenging problem. 
For toric surfaces
Koll\'ar and Shepherd-Barron \cite{kol-she} showed that there is a correspondence between certain partial resolutions (P-resolutions) and the reduced versal base space components. 
Furthermore, in \cite{chr} and \cite{ste2} Christophersen and Stevens gave a set of equations for each reduced component of the versal base space.  
For higher dimensional toric varieties the versal base space was computed by Altmann \cite{alt3} in the case 
of isolated toric Gorenstein singularities.

In order to better understand the deformation theory of $X=\spec(A)$, we need to understand the cup product $T^1(A)\times T^1(A)\to T^2(A)$ between Andre-Quillen cohomology groups. The associated quadratic form describes the equations of the versal base space (if exists) up to second order.  A formula for computing the cup product for toric varieties that are smooth in codimension 2 was obtained in \cite{alt2}. Since this formula is especially simple in the case of three-dimensional isolated toric Gorenstein singularities, it helped Altmann to construct the versal base space in \cite{alt3}. 
The cup product of toric varieties was also analysed by Sletsj\o e \cite{sle} but unfortunately there is a mistake in the paper (see Section 3). 

The cup product is coming from the differential graded Lie algebra (dgla for short) arising from the cotangent complex. This dgla is isomorphic to the Harrison dgla. The Lie bracket induces the product $H^2_{(1)}(A)\times H^2_{(1)}(A)\to H^3_{(1)}(A)$ between the Harrison cohomology groups, which is isomorphic to the cup product $T^1(A)\times T^1(A)\to T^2(A)$. 

In this paper we give a convex geometric description of the Harrison product for an affine toric variety $\spec(A)$. This gives us a general cup product formula $T^1(A)\times T^1(A)\to T^2(A)$ (without the assumption of smoothness in codimension 2) that agrees in the case of Gorenstein isolated singularities with Altmann's cup product formula. We obtain a nice expression of the cup product especially for Gorenstein not necessarily isolated singularities. This gives us an idea how the versal base space in special lattice degrees could look like. Note that since we are dealing with the non-isolated case, the $T^1(A)$ is non-zero in infinitely many lattice degrees. 

We also generalize the above description to the product $\HH^2(A)\times \HH^2(A)\to \HH^3(A)$, induced by the Lie bracket (also called the Gerstenhaber bracket) of the Hochschild dgla. This product  is also known as the Gerstenhaber product. 
As an application we obtain that this product is zero for Gorenstein toric surfaces. This is interesting since it might lead to the formality theorem for (singular) Gorenstein toric surfaces. The formality theorem has been proved for smooth affine varieties (see \cite{kon2}, \cite{dol-tam-tsy}).

The paper is organized as follows: in Section 2 we recall deformation theory of toric varieties. In Section 3 we give a convex geometric description of the product $H^2_{(1)}(A)\times H^2_{(1)}(A)\to H^3_{(1)}(A)$ for toric varieties. The cup product in the special case of toric Gorenstein  singularities is computed in Section 4 (see Theorem \ref{prop cup iso} and Subsection 4.2), where we also show that our product agrees with Altmann's cup product formula for isolated toric Gorenstein singularities (see Corollary \ref{cor altmann}). We describe the quadratic equations of the versal base space in the Gorenstein degree $-R^*$ in Corollary \ref{cor quad}. In Section 5 we analyse the Gerstenhaber product $\HH^2(A)\times \HH^2(A)\to \HH^3(A)$ for toric varieties. The proof that this product is the zero map for Gorenstein toric surfaces is done in Proposition \ref{pr gor sur}.

\section{Preliminaries}

\subsection{Toric geometry}
Let $k$ be a field of characteristic 0. Let $M,N$ be mutually dual, finitely generated, free Abelian groups. We denote by $M_{\RR}$, $N_{\RR}$ the associated real vector spaces obtained via base change with $\RR$. Let $\sigma=\lan a_1,...,a_N \ran\subset N_{\RR}$ be a rational, polyhedral cone with apex in $0$ and let $a_1,...,a_N\in N$ denote its primitive fundamental generators (i.e. none of the $a_i$ is a proper multiple of an element of $N$). We define the dual cone $\sigma^{\vee}:=\{r\in M_{\RR}~|~\lan \sigma,r \ran\geq 0\}\subset M_{\RR}$ and denote by $\Lambda:=\sigma^{\vee}\cap M$ the resulting semi-group of lattice points. Its spectrum $\text{Spec}(k[\Lambda])$ is called an \emph{affine toric variety}.

\subsection{The Hochschild dgla}
For any finitely generated $k$-algebra, we can define the cotangent complex $\LL_{A|k}$ and its 
 derived exterior powers $\wedge^i\LL_{A|k}$ (see e.g.\ \cite{lod}).
 The $n$-th cohomology group of $\h_A(\wedge^i\LL_{A|k},A)$ is called the $n$-th (higher) \emph{Andr\'e-Quillen cohomology} group and denoted by $T^n_{(i)}(A)$. We will also use the following notation $T^n(A):=T^n_{(1)}(A)$ for $n\geq 1$.

Using the notation from \cite{fil} we denote by $C^{\kbb}(A)$ the Hochschild cochain complex and by $C^n(A)=C^n_{(1)}(A)\oplus \cdots \oplus C^n_{(n)}(A)$ the Hodge decomposition which induces the decomposition in cohomology
$\HH^n(A)\cong H^n_{(1)}(A)\oplus \cdots \oplus H^n_{(n)}(A),$
where $\HH^n(A)$ is the $n$-th Hochschild cohomology group and $H^n_{(i)}(A)$  is the $n$-th cohomology of $C^{\kbb}_{(i)}(A)$.
It is well known  (see \cite[Proposition 4.5.13]{lod}) that $T^{n-i}_{(i)}(A)\cong H^n_{(i)}(A)$ for all $i\geq 1$.

In order to get a dgla structure on the Hochschild cochain complex we need to shift it by $1$. 
The Lie bracket $[\cdot,\cdot]:C^n(A)\times C^m(A)\to C^{m+n-1}(A)$, which is also called the \emph{Gerstenhaber bracket}, is well known so we skip the definition of it (see e.g.\ \cite[Section 2]{fil}). 
In particular, the Gerstenhaber bracket induces the product
\begin{equation}\label{eq ger pr pr}
[\cdot,\cdot]:\HH^2(A)\times \HH^2(A)\to \HH^3(A),
\end{equation}
between the important $A$-modules mentioned in Introduction.
The product in \eqref{eq ger pr pr} is called the \emph{Gerstenhaber product}.

We denote the projectors of $\HH^n(A)$ to $H^n_{(i)}(A)$ by $e_n(i)$.
\begin{lemma}\label{lem pal lem}
For an element $p\in H^2_{(2)}(A)$ and an element $q\in H^2_{(1)}(A)$ we have the following: 
\begin{itemize}
\item the equation $e_3(3)[p,p]=0$ is the Jacobi identity, $e_3(2)[p,p]=0$
\item $[p,q]=e_3(2)[p,q]$ and $[q,q]=e_3(1)[q,q]$.
\end{itemize}
\end{lemma}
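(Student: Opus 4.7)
The plan is to reduce all four assertions to a single weight-compatibility property of the Gerstenhaber bracket with respect to the Hodge decomposition, namely that
$$[H^n_{(i)}(A), H^m_{(j)}(A)] \subseteq H^{n+m-1}_{(i+j-1)}(A).$$
Granted this, each bullet becomes a matter of bookkeeping. For $p \in H^2_{(2)}(A)$ the bracket $[p,p]$ lies in $H^3_{(3)}(A)$, so its projection to $H^3_{(2)}(A)$ vanishes automatically, while the remaining weight-$3$ projection $e_3(3)[p,p]$ is precisely the obstruction for the Hochschild $2$-cocycle $p$ to define a homotopy Poisson bivector on $A$, i.e.\ the Jacobi identity. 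Similarly, for $p \in H^2_{(2)}(A)$ and $q \in H^2_{(1)}(A)$ one reads off $[p,q] \in H^3_{(2)}(A)$, and for $q \in H^2_{(1)}(A)$ one obtains $[q,q] \in H^3_{(1)}(A)$; the latter is the classical fact that the Harrison complex sits as a sub-dgla inside the Hochschild dgla.

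The first route I would try for the weight compatibility is direct and combinatorial on cochains. The Hodge summand $C^n_{(i)}(A)$ is the image of the $i$-th Eulerian idempotent $e_n^{(i)} \in \QQ[S_n]$, equivalently the $k^i$-eigenspace of every Adams operation $\psi^k$ acting on $C^n(A)$. Using the pre-Lie formula for the Gerstenhaber circle product, I would verify that the composition of $f \in C^n_{(i)}(A)$ and $g \in C^m_{(j)}(A)$ lands in $C^{n+m-1}_{(i+j-1)}(A)$ by computing the product of the relevant Eulerian idempotents in $\QQ[S_{n+m-1}]$ and matching shuffle coefficients at the insertion site; antisymmetrising yields the bracket version. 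A cleaner alternative would be to use the identification $H^n_{(i)}(A) \cong T^{n-i}_{(i)}(A) \cong \e^{n-i}_A(\wedge^i \LL_{A|k}, A)$ from \cite[Prop.~4.5.13]{lod} and to recognise the Gerstenhaber bracket under this identification as the Schouten--Nijenhuis bracket on derived polyvector fields, for which the weight formula is manifest.

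The main obstacle I anticipate lies in the second approach: showing that the comparison quasi-isomorphism between the Hochschild complex and derived polyvector fields intertwines the Gerstenhaber and Schouten brackets. For smooth $A$ this is essentially the content of Kontsevich's formality theorem, but for arbitrary finitely generated commutative $k$-algebras one has to work at the level of the cotangent complex, and the compatibility only holds up to coherent homotopies, so it cannot simply be quoted. For this reason I would ultimately rely on the combinatorial route, which bypasses these subtleties and remains within the elementary framework of Eulerian idempotents and shuffle combinatorics; the remaining work there is a careful but essentially routine identification of shuffle coefficients on both sides.
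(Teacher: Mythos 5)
Your reduction of all four assertions to the blanket weight rule $[H^n_{(i)}(A),H^m_{(j)}(A)]\subseteq H^{n+m-1}_{(i+j-1)}(A)$ is the problem: that rule is false in general, and the paper itself says so (see the remark after Lemma \ref{lem ger pr}: ``In general the Gerstenhaber product does not respect the Hodge decomposition''). The failure occurs exactly in the case you need most, $i=j=2$: for $p\in H^2_{(2)}(A)$ the bracket $[p,p]$ has, besides the weight-$3$ Jacobiator, a possibly nonzero weight-$1$ component $e_3(1)[p,p]$. This component is precisely the first obstruction to deformation quantization of a Poisson structure on a singular algebra; its vanishing for affine toric varieties is the nontrivial theorem of \cite{fil} quoted right after the lemma, and the sentence ``for $p$ not a Poisson structure, $[p,p]\ne 0$'' only makes sense because $[p,p]$ is \emph{not} confined to weight $3$. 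So your weight rule would simultaneously prove too much (it forces $e_3(1)[p,p]=0$ for formal reasons, trivializing the quantization result) and cannot be established: the combinatorial route you fall back on — checking that the pre-Lie composition of Eulerian-idempotent images lands in the expected eigenspace — is exactly where the identity breaks down, since the circle product does not intertwine the shuffle/Adams operations in that naive way; and you yourself concede that the Schouten-bracket/formality route cannot be quoted for non-smooth $A$.

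What the lemma actually asserts is a collection of low-degree identities that must be checked directly, which is what the paper does (an explicit computation on representatives, following \cite{pal2}). Concretely: in degree $2$ the Hodge condition $\varphi\circ s_2=(2^i-2)\varphi$ says weight-$1$ cochains are symmetric and weight-$2$ cochains are antisymmetric; one writes out the Gerstenhaber bracket of such representatives and applies $s_3$ to the resulting $3$-cochain, reading off its eigencomponents. This yields exactly the stated facts — $e_3(3)[p,p]$ is the Jacobiator, $e_3(2)[p,p]=0$, $[p,q]$ is a pure eigenvector of eigenvalue $2^2-2$, and $[q,q]$ is killed by $s_3$ (the classical fact that the Harrison complex is a sub-dgla in characteristic $0$) — and nothing more; in particular no statement about $e_3(1)[p,p]$, which is the point. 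Your proposal, as written, does not supply this computation and rests on a premise that contradicts the surrounding text, so it has a genuine gap.
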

\begin{proof}
An easy computation, see also \cite{pal2}.
\end{proof}

Using Lemma \ref{lem pal lem}, we see that the Gerstenhaber product consists of the products $H^2_{(i)}(A)\times H^2_{(1)}(A)\to H^3_{(i)}(A)$, for $i=1,2$ and $H^2_{(2)}(A)\times H^2_{(2)}(A)\to \HH^3(A)$. 

In \cite{fil} we showed that every Poisson structure $p\in H^2_{(2)}(A)$ on an affine toric variety $X_{\sigma}=\spec(A)$ can be quantized, which implies that $[p,p]=0\in \HH^3(A)$. Note also that for an element $p\in H^2_{(2)}(A)$ that is not a Poisson structure, Lemma \ref{lem pal lem} implies that $[p,p]\ne 0$.
In this paper we will focus in understanding the remaining two products $H^2_{(i)}(A)\times H^2_{(1)}(A)\to H^3_{(i)}(A)$, for $i=1,2$ and $A$ an affine toric variety.

\subsection{The Hochschild dgla of toric varieties}

From \cite{fil} we recall the following. 
In the group ring of the permutation group $S_n$ one defines the shuffle $s_{i,n-i}$ to be $\sum (\sgn \pi)\pi$, where the sum is taken over those permutations $\pi\in S_n$ such that $\pi (1) <\pi (2)<\cdots<\pi (i)$ and $\pi(i+1)<\pi(i+2)<\cdots <\pi(n)$. 
Let $s_n=\sum_{i=1}^{n-1}s_{i,n-i}$.
\begin{definition}\label{def monf}
$L\subset \Lambda$ is said to be \emph{monoid-like} if for all elements $\lam_1,\lam_2\in L$ the relation $\lam_1-\lam_2\in \Lambda$ implies $\lam_1-\lam_2\in L$. Moreover, a subset $L_0\subset L$ of a monoid-like  set is called \emph{full} if $(L_0+\Lambda)\cap L=L_0.$ 
\end{definition}
For any subset $P\subset \Lambda$ and $n\geq 1$ we introduce $S_n(P):=\{(\lam_1,...,\lam_n)\in P^n~|~\sum_{v=1}^n\lam_v\in P\}$. If $L_0\subset L$ are as in the previous definition, then this gives rise to the following vector spaces ($1\leq i\leq n$):
$$C^n_{(i)}(L,L\setminus L_0;k):=\{\varphi: S_n(L)\to k~|~\ffi\circ s_n=(2^i-2)\varphi, \,\varphi \,\text{ vanishes on }\,S_n(L\setminus L_0)\},$$
which turn into a complex with the differentials 
\begin{equation}\label{def dif d}
d:C_{(i)}^{n-1}(L,L\setminus L_0;k)\to C_{(i)}^n(L,L\setminus L_0;k),
\end{equation}
$$(d\varphi)(\lam_1,...,\lam_n):=$$ $$\varphi(\lam_2,...,\lam_n)+\sum^{n-1}_{i=1}(-1)^{i}\varphi(\lam_1,...,\lam_i+\lam_{i+1},...,\lam_n)+(-1)^n\varphi(\lam_1,...,\lam_{n-1}).$$

\begin{definition}
By $H^n_{(i)}(L,L\setminus L_0;k)$ we denote the cohomology groups of the above complex $C_{(i)}^{\kbb}(L,L\setminus L_0;k)$. We denote $H_{(i)}^n(L,\emptyset;k)$ shortly by $H_{(i)}^n(L;k)$.
\end{definition}

It is a trivial check that for $A=k[\Lambda]$ the Hochschild differentials respect the grading given by the degrees $R\in M$. Thus we get the Hochschild subcomplex $C^{\kbb,R}_{(i)}$ and we denote the corresponding cohomology groups by $H^{n,R}_{(i)}(A)\cong T^{n-i,R}_{(i)}(A)$. When an algebra $A$ will be clear from the context, we will also write $H^n_{(i)}(R)$. It holds that $\HH^n(A)=\bigoplus_{R\in M}\HH^{n,R}(A)$ and $\HH^{n,R}(A)\cong \oplus_iH^{n,R}_{(i)}(A)$. Thus we can analyse the Hochschild cohomology groups by analysing them in every degree $R\in M$.

For an element $R\in M$ we denote $\Lambda(R):=\Lambda+R$.
\begin{proposition}\label{prop 1}
Let $A=k[\Lambda]$ and $R\in M$. It holds that
\begin{equation}\label{sisom}
H^{n,-R}_{(i)}(A)\cong T_{(i)}^{n-i,-R}(A)\cong H_{(i)}^{n}(\Lambda,\Lambda\setminus \Lambda(R);k).
\end{equation}
\end{proposition}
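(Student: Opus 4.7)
The first isomorphism $H^{n,-R}_{(i)}(A) \cong T^{n-i,-R}_{(i)}(A)$ is just the $M$-graded summand of degree $-R$ of the Hodge decomposition already cited before the proposition (\cite[Prop.~4.5.13]{lod}). The real content is therefore the second isomorphism, which I plan to prove by matching the Hochschild complex of $A = k[\Lambda]$, in degree $-R$, with the combinatorial complex $C^n_{(i)}(\Lambda,\Lambda\setminus\Lambda(R);k)$ on the nose as cochain complexes.

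First, I interpret the graded piece $C^{n,-R}(A)$. Writing $A = \bigoplus_{\mu\in\Lambda} k\cdot x^\mu$, any cochain $\Phi$ of degree $-R$ is determined by scalars $\varphi(\lambda_1,\ldots,\lambda_n)\in k$ through
$$\Phi(x^{\lambda_1}\otimes\cdots\otimes x^{\lambda_n}) \;=\; \varphi(\lambda_1,\ldots,\lambda_n)\, x^{\lambda_1+\cdots+\lambda_n-R},$$
with $\varphi(\lambda_1,\ldots,\lambda_n)=0$ whenever $\sum_i\lambda_i\notin\Lambda(R)$ (since $x^{\mu}=0$ in $A$ for $\mu\notin\Lambda$). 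A short computation then shows that the Hochschild coboundary is given by the combinatorial differential \eqref{def dif d} applied to $\varphi$: the outer terms $x^{\lambda_1}\cdot\Phi(x^{\lambda_2},\ldots)$ and $\Phi(\ldots,x^{\lambda_{n-1}})\cdot x^{\lambda_n}$ contribute $\varphi(\lambda_2,\ldots,\lambda_n)$ and $\pm\varphi(\lambda_1,\ldots,\lambda_{n-1})$ after extracting the coefficient of $x^{\sum_i\lambda_i-R}$. The shuffle projectors on $C^n(A)$ act by precomposition with $s_n$ on the arguments, so $\Phi$ lies in the $i$-th Hodge summand exactly when $\varphi\circ s_n = (2^i-2)\varphi$.

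The key combinatorial point is then to match the resulting support condition ``$\varphi$ is supported on $\{\sum_i\lambda_i\in\Lambda(R)\}$'' with the paper's condition ``$\varphi$ vanishes on $S_n(\Lambda\setminus\Lambda(R))$''. Since $\Lambda(R) = \Lambda + R$ is stable under addition by elements of $\Lambda$, the implication
$$\lambda_j\in\Lambda(R)\ \text{for some }j \;\Longrightarrow\; \textstyle\sum_i\lambda_i\in\Lambda(R)$$
holds; its contrapositive shows that $\sum_i\lambda_i\notin\Lambda(R)$ already forces every $\lambda_j\in\Lambda\setminus\Lambda(R)$, so $S_n(\Lambda\setminus\Lambda(R))=\{(\lambda_1,\ldots,\lambda_n)\in\Lambda^n:\sum_i\lambda_i\notin\Lambda(R)\}$. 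Thus the two support conditions coincide and $C^{n,-R}_{(i)}(A)=C^n_{(i)}(\Lambda,\Lambda\setminus\Lambda(R);k)$ as complexes, which yields \eqref{sisom}.

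The argument is almost entirely formal bookkeeping; the one observation to isolate is that $\Lambda(R)$ behaves as an ``ideal'' for $\Lambda$-translation, which collapses the two a priori different support conditions into one. The mildest subtlety is that the Hodge projectors translate cleanly to the shuffle condition on $\varphi$, but this is immediate because the shuffles merely permute the slots $(\lambda_1,\ldots,\lambda_n)$ and commute with the scalar extraction $\Phi\mapsto\varphi$.
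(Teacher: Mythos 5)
Your proposal is correct. The paper itself gives no argument here — it simply cites \cite[Proposition 4.2]{fil} — and your direct identification of the degree $-R$ Hochschild subcomplex $C^{\kbb,-R}_{(i)}(A)$ with $C^{\kbb}_{(i)}(\Lambda,\Lambda\setminus\Lambda(R);k)$ (coefficient extraction $\Phi\mapsto\varphi$, matching of the differentials, compatibility of the shuffle eigenvalue condition, and the key observation that $\Lambda(R)$ is stable under translation by $\Lambda$, so ``$\sum_i\lambda_i\notin\Lambda(R)$'' coincides with membership in $S_n(\Lambda\setminus\Lambda(R))$) is exactly the standard argument underlying that citation, with the first isomorphism being the graded piece of the Hodge decomposition $T^{n-i}_{(i)}(A)\cong H^n_{(i)}(A)$ already quoted in the preliminaries.
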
  
\begin{proof}
See \cite[Proposition 4.2]{fil}.
\end{proof}
Proposition \ref{prop 1} tells us how to compute the Hochschild comology groups in degree $-R$. 

The next lemma describes the Gerstenhaber product in the toric setting.
\begin{lemma}\label{lem ger pr}
For each $i\in\{1,2\}$ the Gerstenhaber bracket induces the product 
$$
[\cdot,\cdot]: C^2_{(i)}(\Lambda,\Lambda\setminus \Lambda(R);k)\times C^2_{(1)}(\Lambda,\Lambda\setminus \Lambda(S);k)\to C^3_{(i)}(\Lambda,\Lambda\setminus \Lambda(R+S);k),
$$
$$[f,g]= 
\begin{array}{c}
f(-S+\lam_1+\lam_2,\lam_3)g(\lam_1,\lam_2)-f(\lam_1,-S+\lam_2+\lam_3)g(\lam_2,\lam_3)+\\
g(-R+\lam_1+\lam_2,\lam_3)f(\lam_1,\lam_2)-g(\lam_1,-R+\lam_2+\lam_3)f(\lam_2,\lam_3).
\end{array}
$$
This product induces the Gerstenhaber product in cohomology 
\begin{equation}\label{eq cup tp}
H^2_{(i)}(\Lambda,\Lambda\setminus \Lambda(R);k)\times H^2_{(1)}(\Lambda,\Lambda\setminus \Lambda(S);k)\to H^3_{(i)}(\Lambda,\Lambda\setminus \Lambda(R+S);k).
\end{equation}
\end{lemma}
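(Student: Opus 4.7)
The plan is to translate the Gerstenhaber bracket on Hochschild cochains of $A = k[\Lambda]$, which for two 2-cochains takes the form $[f,g] = f\,\bar\circ\, g + g\,\bar\circ\, f$ with $(f\,\bar\circ\, g)(a_1,a_2,a_3) = f(g(a_1,a_2),a_3) - f(a_1,g(a_2,a_3))$, into the combinatorial language of $C^{\kbb}(\Lambda,\Lambda\setminus\Lambda(R);k)$ through the isomorphism of Proposition \ref{prop 1}, and then to verify that the resulting explicit formula lands in the claimed target and passes to cohomology. Under the identification $f(e^{\lam_1},e^{\lam_2}) = \varphi_f(\lam_1,\lam_2)\,e^{\lam_1+\lam_2-R}$ (with $\varphi_f$ extended by zero whenever $\lam_1+\lam_2-R \notin \Lambda$) for $f$ of degree $-R$, substituting into the four Gerstenhaber insertion terms yields a monomial of degree $\lam_1+\lam_2+\lam_3-R-S$ whose coefficient is exactly the displayed four-term expression; the alternating signs are precisely the signs of the insertions.

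The key step is to check that the formula defines an element of $C^3_{(i)}(\Lambda,\Lambda\setminus\Lambda(R+S);k)$. The vanishing on $S_3(\Lambda\setminus\Lambda(R+S))$ is verified by a short case analysis: assume $(\lam_1,\lam_2,\lam_3)$ has none of $\lam_1,\lam_2,\lam_3,\lam_1+\lam_2+\lam_3$ in $\Lambda(R+S)$, yet the first term $\varphi_f(\lam_1+\lam_2-S,\lam_3)\varphi_g(\lam_1,\lam_2)$ is nonzero. Nonvanishing of $\varphi_g$ forces one of $\lam_1,\lam_2,\lam_1+\lam_2$ into $\Lambda(S)$, while nonvanishing of $\varphi_f$ forces one of $\lam_1+\lam_2-S,\lam_3,\lam_1+\lam_2+\lam_3-S$ into $\Lambda(R)$; enumerating the nine combinations and using only that $\Lambda$ is closed under addition, one obtains in every case some element of $\{\lam_1,\lam_2,\lam_3,\lam_1+\lam_2+\lam_3\}$ lying in $\Lambda(R+S)$, a contradiction. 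The other three terms are handled symmetrically. The shuffle condition $\psi\circ s_3 = (2^i-2)\psi$ required to land in the $(i)$-piece reflects the compatibility of the Gerstenhaber bracket with the Hodge decomposition recorded in Lemma \ref{lem pal lem}: the bracket of an $(i)$-cochain with a $(1)$-cochain is an $(i)$-cochain, either at the cochain level directly by rewriting $\psi\circ s_3$ through the shuffle constraints on $\varphi_f$ and $\varphi_g$, or after projecting to $e_3(i)$ without altering the resulting cohomology class.

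Finally, descent to the cohomology product \eqref{eq cup tp} is formal from the dgla structure of the Hochschild complex: the differential in \eqref{def dif d} matches the Hochschild coboundary under the identification of Proposition \ref{prop 1}, and the Gerstenhaber bracket satisfies a graded Leibniz rule with respect to it, so the bracket sends cocycles to cocycles and absorbs coboundaries. The main obstacle is the middle step, and within it the shuffle/Hodge compatibility is the most conceptually subtle point, the cleanest way to handle it being via Lemma \ref{lem pal lem}; the vanishing case analysis is finicky but routine, and the descent to cohomology is essentially automatic.
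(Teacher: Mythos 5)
Your overall route is the natural one, and it is essentially the argument hiding behind the paper's proof, which is simply a citation of \cite[Lemma 5.4]{fil}: translating the two insertion terms of the Gerstenhaber bracket through the monomial identification $f(x^{\lam_1},x^{\lam_2})=\varphi_f(\lam_1,\lam_2)\,x^{\lam_1+\lam_2-R}$ does reproduce the displayed four-term formula with the stated signs, and your support analysis is correct: in each of the nine combinations one in fact ends up with $\lam_1+\lam_2+\lam_3\in\Lambda(R+S)$, so each of the four terms vanishes separately on $S_3(\Lambda\setminus \Lambda(R+S))$. Two conventions should be made explicit, though they cause no real trouble: terms such as $f(-S+\lam_1+\lam_2,\lam_3)$ must be declared zero when the shifted argument leaves $\Lambda$ (automatic in the Hochschild picture, a convention in the combinatorial one), and the complex $C^{\kbb}_{(i)}(\Lambda,\Lambda\setminus\Lambda(R);k)$ is strictly larger than the degree $-R$ part of the Hochschild complex (its vanishing condition involves all $\lam_v$ together with the sum, not just the sum), so the cocycle/Leibniz verifications have to be run for the formula on this larger complex; the computation is verbatim the same, but "formal from the Hochschild dgla" is not literally a citation that applies.

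The genuine soft spot is the Hodge/shuffle condition $\psi\circ s_3=(2^i-2)\psi$, which you assert rather than prove. Your fallback via the projector $e_3(i)$ does salvage the cohomological statement \eqref{eq cup tp}: the Eulerian idempotents are combinations of permutations of the arguments, $S_3(\Lambda\setminus\Lambda(R+S))$ is permutation-stable, $e_3(i)$ commutes with $d$, and Lemma \ref{lem pal lem} (after polarization, using that the bracket of two $2$-cochains is symmetric) guarantees the class is unchanged. But Lemma \ref{lem pal lem} is a statement about cohomology classes, so this route does not establish the cochain-level assertion of the lemma that $[f,g]$ itself lies in $C^3_{(i)}(\Lambda,\Lambda\setminus\Lambda(R+S);k)$; for that the $s_3$-computation has to be carried out (for $i=1$ this is the classical fact that Harrison cochains are closed under the Gerstenhaber bracket, for $i=2$ the analogous check with $f$ antisymmetric and $g$ symmetric), and this is exactly the content delegated to \cite[Lemma 5.4]{fil}. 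So: correct strategy, correct support analysis, but the Hodge-piece membership at the cochain level is left unverified, and only the weaker cohomological form is actually within reach of what you wrote.
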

\begin{proof}
See \cite[Lemma 5.4]{fil}.
\end{proof}

\begin{remark}
Note that in \cite[Lemma 5.4]{fil} we obtained a complete description of the Gerstenhaber product (also for other parts of the Hodge decomposition). In general the Gerstenhaber product does not respect the Hodge decomposition like in Lemma \ref{lem ger pr}. For us only the two products in Lemma \ref{lem ger pr} will be important. In \eqref{eq cup tp} we describe the Gerstenhaber product 
$H^{2,-R}_{(i)}(k[\Lambda])\times H^{2,-S}_{(1)}(k[\Lambda])\to H^{3,-R-S}_{(i)}(k[\Lambda])$.
The Gerstenhaber product always respects the toric grading (i.e. two elements of degree $R$ and $S$ are mapped to an element of degree $R+S$). Thus we can analyse the Gerstenhaber product by analysing it in toric degrees.
\end{remark}

In the following we will recall from \cite{fil} how the groups appearing in \eqref{eq cup tp} can be nicely interpreted. This will later lead to  a nice interpretation of the Gerstenhaber product.

For a face $\tau$ in  $\sigma$ (denoted $\tau \leq \sigma$) we define the convex sets introduced in \cite{klaus}:
\begin{equation}\label{eq con set} 
K_{\tau}^R:=\Lambda\cap (R-\mathrm{int}\tau^{\vee}). 
\end{equation}
The above convex sets admit the following properties ($\sigma=\lan a_1,...,a_N\ran$): 
\begin{itemize}
\item $K_0^R=\Lambda$ and $K_j^R:=K_{a_j}^R=\{r\in \Lambda~|~\lan a_j,r \ran<\lan a_j,R \ran\}$ for $j=1,...,N$.
\item For $\tau\ne 0$ the equality $K^R_{\tau}=\cap_{a_j\in \tau}K_{a_j}^R$ holds.
\item $\Lambda \setminus (R+\Lambda)=\cup_{j=1}^NK_{a_j}^R$.
\end{itemize}

\begin{example}\label{ex tri intro}
Let $a_1=(-1,2)$ and $a_2=(1,2)$.  
Let $\sigma=\lan a_1,a_2\ran$ and thus the Hilbert basis of $\Lambda=\sigma^\vee\cap M$ is 
$$H=\{(-2,1),(-1,1),(0,1),(1,1),(2,1)\}.$$ 
Let $R=(0,1)$. 
We have $\Lambda\setminus \Lambda(R)=K^R_{a_1}\cup K^R_{a_2}$, where 
$$K^R_{a_1}=\{(r_1,r_2)\in \Lambda~|~-r_1+2r_2<2\}=\{(0,0),(2,1),(1,1),(4,2),(3,2),...\},$$
$$K^R_{a_2}=\{(r_1,r_2)\in \Lambda~|~r_1+2r_2<2\}=\{(0,0),(-2,1),(-1,1),(-4,2),(-3,2),...\}.$$
\end{example}
For each $i\geq 1$ we have the following important double complexes (see \cite[Section 4.2]{fil}). We define 
$C^q_{(i)}(K_{\tau}^R;k):=C^q_{(i)}(K_{\tau}^R,\emptyset;k)$ and
$$C^q_{(i)}(K_p^R;k):=\oplus_{\tau\leq \sigma,\dim \tau=p}C^q_{(i)}(K^R_{\tau};k)~~~~(0\leq p\leq \dim \sigma,~q\geq i).$$
The differentials 
\begin{equation}\label{eq delta}
\delta: C^q_{(i)}(K^R_{p};k)\to C_{(i)}^q(K_{p+1}^R;k)
\end{equation}
 are defined in the following way: we are summing (up to a sign) the images of the restriction map $C^q_{(i)}(K_{\tau}^R;k)\to C^q_{(i)}(K_{\tau'}^R;k)$, for any pair $\tau\leq \tau'$ of $p$ and $(p+1)$-dimensional faces, respectively. The sign arises from the comparison of the (pre-fixed) orientations of $\tau$ and $\tau'$ (see also \cite[pg. 580]{cox} for more details).
For each $i\geq 1$ this construction gives us the double complexes that we shortly denote by 
\begin{equation}\label{eq im double}
C^{\kbb}_{(i)}(K^R_{\kbb};k).
\end{equation}

\begin{proposition}\label{th 23}
$T^{n-i,-R}_{(i)}(A)=H^{n}\big (\tot^{\kbb}(C_{(i)}^{\kbb}(K^R_{\kbb};k))\big )$ for $1\leq i\leq n$.
\end{proposition}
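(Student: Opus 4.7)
The plan is to identify $H^n(\tot^\kbb(C^\kbb_{(i)}(K^R_\kbb;k)))$ with $H^n_{(i)}(\Lambda,\Lambda\setminus \Lambda(R);k)$, which by Proposition \ref{prop 1} equals $T^{n-i,-R}_{(i)}(A)$. My strategy is to realise the relative Hochschild complex as an augmentation of the double complex along the \v{C}ech-like $\delta$-direction: I would place $C^\kbb_{(i)}(\Lambda,\Lambda\setminus \Lambda(R);k)$ in column $p=-1$, with augmentation induced by the natural inclusion into $C^\kbb_{(i)}(\Lambda;k)=C^\kbb_{(i)}(K^R_0;k)$ (using $K^R_0=\Lambda$). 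Provided this augmented double complex is $\delta$-acyclic for each fixed Hochschild degree $q$, a standard spectral sequence collapse immediately yields the required isomorphism.

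To establish $\delta$-acyclicity in fixed $q$, I would work pointwise on tuples $\un{\lambda}\in S_q(\Lambda)$. At each $\un{\lambda}$, the augmented \v{C}ech column reduces to the augmented reduced chain complex of a simplicial complex $\Delta(\un{\lambda})$ whose $p$-simplices are the $p$-dimensional faces $\tau\leq \sigma$ satisfying $\un{\lambda}\in S_q(K^R_\tau)$. Exploiting $K^R_\tau=\bigcap_{a_j\in \tau}K^R_{a_j}$ together with $\Lambda\setminus \Lambda(R)=\bigcup_j K^R_{a_j}$, one distinguishes two cases: either $\un{\lambda}\notin S_q(\Lambda\setminus \Lambda(R))$, in which case the augmentation contributes a copy of $k$ matched exactly by the $p=0$ entry and the remainder is trivially exact; or $\un{\lambda}\in S_q(\Lambda\setminus \Lambda(R))$, in which case the augmentation vanishes and one must show that $\Delta(\un{\lambda})$ is non-empty and contractible.

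I expect the contractibility of $\Delta(\un{\lambda})$ in the second case to be the main obstacle, particularly when the cone $\sigma$ is not simplicial, since then not every subset of $\{a_1,\ldots,a_N\}$ spans a face of $\sigma$. The combinatorial structure of $\Delta(\un{\lambda})$ depends simultaneously on the face lattice of $\sigma$ and on the index set $J(\un{\lambda})=\{j\mid \un{\lambda}\in S_q(K^R_{a_j})\}$, and showing that these always assemble into a contractible subcomplex of the face poset of $\sigma$ requires the careful convex-geometric bookkeeping developed in \cite{fil}. Once this pointwise contractibility is in hand, the spectral sequence of the augmented double complex collapses and delivers the stated identification.
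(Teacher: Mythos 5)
Your overall route is the one the paper itself relies on: the paper proves Proposition~\ref{th 23} only by citing \cite[Proposition 4.4]{fil}, and that argument (in the tradition of \cite{klaus}) is exactly the strategy you describe --- augment the double complex in the $\delta$-direction by $C^{\kbb}_{(i)}(\Lambda,\Lambda\setminus\Lambda(R);k)$, prove exactness of the augmented rows for each fixed Hochschild degree $q$, and let the standard spectral-sequence collapse combined with Proposition~\ref{prop 1} finish the job. One small point you gloss over: the spaces $C^q_{(i)}$ are carved out by the shuffle-eigenvalue condition, so the rows do not literally decompose tuple by tuple; you need to remark that permuting the entries of $\un{\lambda}$ does not change membership in $S_q(K^R_\tau)$ and that $\delta$ is equivariant for this action, so in characteristic $0$ exactness of the full rows passes to each Hodge eigencomponent.

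The genuine gap is the step you yourself flag: the contractibility of $\Delta(\un{\lambda})$ is left as a black box, and your description of $\Delta(\un{\lambda})$ misses the simplification that makes it provable. Since $\lan a_j,\lam_v\ran\geq 0$ for $\lam_v\in\Lambda$, if the sum $c:=\sum_v\lam_v$ lies in $K^R_{a_j}$ then automatically every entry does; hence $\un{\lambda}\in S_q(K^R_\tau)$ depends only on $c$, namely it holds iff $\lan a_j,c-R\ran<0$ for every ray $a_j$ of $\tau$. Consequently $J(\un{\lambda})=\{j:\lan a_j,c-R\ran<0\}$ and $\Delta(\un{\lambda})$ is precisely the collection of nonzero faces of $\sigma$ contained (away from the apex) in the open half-space $\{\lan\,\cdot\,,c-R\ran<0\}$; it is nonempty exactly in your second case. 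Contractibility of such a half-space subfan is a standard convexity statement and is the convex-geometric lemma underlying \cite{klaus} and \cite{fil}; note it is not simply the existence of a unique maximal qualifying face (for a cone over a square, $J$ can consist of three consecutive rays, giving two maximal $2$-faces sharing a ray), so the half-space description really is what one must use. As written, your proposal reduces the proposition to this unproved claim --- admittedly the same reference the paper itself leans on --- but with the observation above the remaining step is a routine convexity argument rather than ``careful bookkeeping,'' and supplying it (or a precise citation for it) is what turns your outline into a proof.
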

\begin{proof}
See \cite[Proposition 4.4]{fil}.
\end{proof}

\begin{proposition}\label{smooth}
If $\tau\leq \sigma$ is a smooth face, then $H^q_{(i)}(K^R_{\tau};k)=0$ for $q\geq i+1$.
\end{proposition}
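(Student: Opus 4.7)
The plan is to leverage the smoothness of $\tau$ to reduce the computation to a tensor product of a finite ``box'' factor and a ``fiber'' factor on which a Hochschild--Kostant--Rosenberg-type concentration holds, so that the Hodge-$(i)$ component sits in cochain degree $i$ and vanishes above.

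First, I would use the smoothness hypothesis: the primitive generators $a_{j_1},\ldots,a_{j_p}$ of $\tau$ extend to a $\ZZ$-basis of $N$, and dually we get elements $e_{j_1}^*,\ldots,e_{j_p}^*\in M$ paired as $\lan a_{j_\ell},e_{j_k}^*\ran=\delta_{\ell k}$. In these coordinates the defining conditions of $K_\tau^R$ reduce to the $p$ simple inequalities $\lan a_{j_\ell},r\ran<\lan a_{j_\ell},R\ran$ on the first $p$ coordinates, while no condition is imposed on the $\tau^\perp$-directions.

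Next, I would introduce on $C^\bullet_{(i)}(K_\tau^R;k)$ the weight filtration indexed by the total $\tau$-weight $\big(\sum_v\lan a_{j_\ell},\lam_v\ran\big)_{\ell=1}^p$ of tuples $(\lam_1,\ldots,\lam_n)\in S_n(K_\tau^R)$. On the associated graded, the complex factors as a tensor product of a ``box'' complex on the finite set $B:=\{v\in\NN^p:v_\ell<\lan a_{j_\ell},R\ran\}$ and a ``fiber'' complex associated to the submonoid $\Lambda\cap\tau^\perp$. The shuffle eigenspace condition $\ffi\circ s_n=(2^i-2)\ffi$ factors along this decomposition via the standard combinatorial identity for shuffles applied to tensor factorizations. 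On the fiber factor, the Hodge-$(i)$ piece is concentrated in cohomological degree $i$ by a HKR-type identification, since the $\tau^\perp$-direction is free of support conditions. On the box factor, a simplicial contraction (subtracting a fixed $e_{j_\ell}^*$ from the first argument) shows acyclicity above degree $0$. A Künneth-type argument for the weight filtration then assembles these into $H^q_{(i)}(K_\tau^R;k)=0$ for $q\geq i+1$.

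The hard part will be making the box/fiber decomposition rigorous, since $\Lambda$ does not in general split as a direct product $\NN^p\times(\Lambda\cap\tau^\perp)$ even when $\tau$ is smooth (simple rank-$2$ singular cones already provide counterexamples). The filtration-based approach circumvents this: the splitting need only hold on graded pieces of the $\tau$-weight filtration, where it is straightforward. A second subtlety is the compatibility of the shuffle eigenspace decomposition with the tensor factorization on each graded piece, which reduces to checking that shuffles of interleaved tuples decompose as products of shuffles on the two factors with eigenvalues multiplying correctly, a standard but delicate combinatorial check.
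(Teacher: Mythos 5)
Your overall strategy (filter by the $\tau$-weights and reduce to a product situation) is reasonable in spirit, but its central step fails: the associated graded of the $\tau$-weight filtration does \emph{not} factor as a tensor product of a finite ``box'' complex on $B$ and a ``fiber'' complex for $\Lambda\cap\tau^{\perp}$. The weight-$w$ graded piece consists of functions on tuples $(\lam_1,\dots,\lam_n)\in\Lambda^n$ whose weights sum to $w$, and such a splitting would require each slice $\Lambda_{w'}:=\{\lam\in\Lambda:\lan a_{j_\ell},\lam\ran=w'_\ell\}$ to be a free cyclic module (a single translate) over $\Lambda_0:=\Lambda\cap\tau^{\perp}$. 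This is false even for smooth rays: take $\sigma\subset\RR^3$ the cone over the unit square, generated by $(0,0,1),(1,0,1),(0,1,1),(1,1,1)$, and $\tau=\lan(0,0,1)\ran$, which is a smooth face. Then $\Lambda_0\cong\NN^2$, while $\Lambda_1=\{(x,y,1):x\geq-1,\,y\geq-1,\,x+y\geq-1\}$ needs the two generators $(-1,0,1)$ and $(0,-1,1)$ over $\Lambda_0$, so it is not a translate of $\Lambda_0$. Hence already the degree-one part of the weight-$1$ graded piece is not ``(functions on a point) $\otimes$ (functions on $\Lambda_0$)'' compatibly with the module structure, and the K\"unneth argument you invoke has nothing to apply to. In other words, the subtlety you correctly flag (no splitting $\Lambda\cong\NN^p\times(\Lambda\cap\tau^{\perp})$) is not removed by passing to graded pieces; it reappears there in the form of non-free slices $\Lambda_{w'}$.

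There is a secondary gap: the asserted concentration of the Hodge-$(i)$ part of the ``fiber'' factor in degree $i$ is not plain HKR, since $k[\Lambda\cap\tau^{\perp}]$ is in general a singular toric algebra; what makes the statement true is that the coefficients $k$ enter through the augmentation $x^{\lam}\mapsto 1$, i.e.\ through a smooth (torus) point, so one needs a localization-type argument — and note that the claim you want for the fiber is exactly the $\tau=\{0\}$ instance of the proposition you are trying to prove. The paper itself gives no internal argument but defers to \cite[Proposition 4.6]{fil} (in the spirit of \cite{klaus}), where the smoothness of $\tau$ is used to pass to the localized monoid $\tau^{\vee}\cap M\cong\NN^p\times\ZZ^{\dim\sigma - p}$ attached to the chart along $\tau$, after which an HKR-type vanishing for this smooth algebra applies. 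If you supply such a localization/invariance step (showing the cohomology of $C^{\kbb}_{(i)}(K^R_\tau;k)$ only depends on the localized data), the filtration and the box/fiber bookkeeping become unnecessary; without it, your argument as written does not establish the vanishing.
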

\begin{proof}
See \cite[Proposition 4.6]{fil}.
\end{proof}

\begin{theorem}\label{th 24}
The $k$-th cohomology group of the complex
$$0\to H^i_{(i)}(\Lambda;k)\to \bigoplus_jH^i_{(i)}(K^R_j;k)\to \bigoplus_{\tau\leq \sigma, \dim\tau=2}H^i_{(i)}(K^R_\tau;k)\to \bigoplus_{\tau\leq \sigma,\dim\tau=3}H^i_{(i)}(K^R_\tau;k)$$
is isomorphic to $T^{k,-R}_{(i)}(A)$, for $k=0,1,2$ ($H^i_{(i)}(\Lambda;k)$ is the degree $0$ term).
\end{theorem}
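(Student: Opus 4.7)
The plan is to compare the complex in the statement with the total complex appearing in Proposition \ref{th 23}. By that proposition, $T^{n-i,-R}_{(i)}(A)$ is the $n$-th cohomology of $\tot^{\bullet}(C^{\bullet}_{(i)}(K^R_{\bullet};k))$, so the goal is to show that the (relevant) cohomology of the total complex of this double complex is computed, up to total degree $i+2$, by the single row $q=i$ of the natural spectral sequence.

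Concretely, I would consider the spectral sequence obtained from the double complex $C^{q}_{(i)}(K^R_{p};k)$ by first taking cohomology in the $q$-direction. Its $E_1$-page is
$$E_1^{p,q}=\bigoplus_{\tau\leq\sigma,\,\dim\tau=p}H^{q}_{(i)}(K^R_{\tau};k),$$
with horizontal differential $d_1$ given by the map $\delta$ from \eqref{eq delta}, and it converges to the cohomology of $\tot^{\bullet}(C^{\bullet}_{(i)}(K^R_{\bullet};k))$. Since $C^{q}_{(i)}$ is only defined for $q\geq i$, the $E_1$-page is concentrated in rows $q\geq i$. The theorem's complex is precisely the row $E_1^{\bullet,i}$ truncated in $p\leq 3$, and its $p$-th cohomology for $p=0,1,2$ is $E_2^{p,i}$.

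The crucial input is Proposition \ref{smooth}, applied to the fact that every face of $\sigma$ of dimension $\leq 1$ is smooth: the origin and each single ray $\lan a_j\ran$ are spanned by a part of a lattice basis. Consequently $E_1^{0,q}=H^{q}_{(i)}(\Lambda;k)=0$ for $q\geq i+1$ and $E_1^{1,q}=\bigoplus_j H^{q}_{(i)}(K^R_j;k)=0$ for $q\geq i+1$. A diagonal count for total degree $i+k$ with $k\in\{0,1,2\}$ then shows that the only potentially nonzero $E_1$-term on the anti-diagonal $p+q=i+k$ lies at $(p,q)=(k,i)$, since the other spots $(0,i+k)$, $(1,i+k-1)$ are killed by the smoothness of $0$- and $1$-dimensional faces.

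It remains to verify that the spectral sequence degenerates at these spots, i.e.\ $E_2^{p,i}=E_{\infty}^{p,i}$ for $p\leq 2$. All higher outgoing differentials $d_r\colon E_r^{p,i}\to E_r^{p+r,i-r+1}$ vanish because $q=i-r+1<i$ lies below the support of the $E_1$-page, and the incoming differentials $d_r$ into $(p,i)$ for $p\leq 2$ emanate from $(p-r,i+r-1)$ with either $p-r<0$ or $p-r\in\{0,1\}$ and $i+r-1\geq i+1$, hence from groups that were already shown to vanish. Putting these pieces together identifies the $k$-th cohomology of the displayed complex with $E_{\infty}^{k,i}$, which is the $(k+i)$-th cohomology of the total complex and thus equals $T^{k,-R}_{(i)}(A)$ by Proposition \ref{th 23}. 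The main bookkeeping obstacle is simply keeping track of the degeneration/vanishing pattern; once the smoothness observation for $\dim\tau\leq 1$ is in hand, the rest is a purely formal spectral sequence argument.
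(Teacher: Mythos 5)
Your argument is correct, and it is essentially the intended one: the paper simply defers to the proof of Theorem 4.6 in \cite{fil}, which rests on exactly the ingredients you use, namely the total-complex identification of Proposition \ref{th 23} and the vanishing $H^q_{(i)}(K^R_\tau;k)=0$ for $q\geq i+1$ on the smooth faces $\tau$ of dimension $0$ and $1$ from Proposition \ref{smooth}, fed into the column-filtration spectral sequence of the double complex $C^{\kbb}_{(i)}(K^R_{\kbb};k)$. Your bookkeeping of the $E_1$-page, the degeneration at the spots $(p,i)$ for $p\leq 2$, and the harmlessness of truncating at $p=3$ is accurate, so the proposal matches the paper's route.
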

\begin{proof}
Follows from the proof of \cite[Theorem 4.6]{fil}.
\end{proof}

%We will analyse the sequence appearing in the Theorem \ref{th 24} in the case $i=1$ and will d

\section{The product $H^2_{(1)}(k[\Lambda])\times H^2_{(1)}(k[\Lambda])\to H^3_{(1)}(k[\Lambda])$}

In this section we give a general formula for the product $H^2_{(1)}(k[\Lambda])\times H^2_{(1)}(k[\Lambda])\to H^3_{(1)}(k[\Lambda])$, extending Altmann's cup product formula on toric varieties that are smooth in codimension 2. Note that Altmann obtained the cup product formula with different methods (using Laudal's description, coming from the cotangent complex).
For $R,S\in M$ and $A=k[\Lambda]$ we give a convex geometric description of the product $H^{2,-R}_{(1)}(A)\times H^{2,-S}_{(1)}(A)\to H^{3,-R-S}_{(1)}(A)$, which we call the \emph{Harrison cup product}.

As mentioned in Introduction, the Harrison cup product was also analysed by Sletsj\o e \cite{sle} but unfortunately with a mistake that we point out now.
We start by recalling basic constructions from \cite{sle}.

For $R\in M$ recall that $\Lambda(R):=\Lambda+R$.
 We have an exact sequence of complexes:
$$0\to C^{\kbb}_{(1)}(\Lambda,\Lambda\setminus \Lambda(R);k)\to C^{\kbb}_{(1)}(\Lambda;k)\to C^{\kbb}_{(1)}(\Lambda\setminus \Lambda(R);k)\to 0.$$

Note that $H^q_{(1)}(\Lambda;k)=0$ for $q\geq 2$ by Proposition \ref{smooth}. Thus we can write the corresponding long exact sequence in cohomology and we get the following.
\begin{corollary}\label{prop d}
The sequence
$$0\to H^{1}_{(1)}(\Lambda,\Lambda\setminus \Lambda(R);k)\to H^{1}_{(1)}(\Lambda;k)\to H^{1}_{(1)}(\Lambda\setminus \Lambda(R);k)\xrightarrow{d} H^{2}_{(1)}(\Lambda,\Lambda\setminus \Lambda(R);k)\to 0$$
is exact 
and
$$H^{n}_{(1)}(\Lambda\setminus \Lambda(R);k)\cong H^{n+1}_{(1)}(\Lambda,\Lambda\setminus \Lambda(R);k),$$
for $n\geq 2$. This isomorphism is induced by the map $d$.
\end{corollary}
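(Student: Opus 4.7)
The plan is a direct application of the long exact sequence in cohomology. The short exact sequence of Harrison complexes
$$0\to C^{\kbb}_{(1)}(\Lambda,\Lambda\setminus \Lambda(R);k)\to C^{\kbb}_{(1)}(\Lambda;k)\to C^{\kbb}_{(1)}(\Lambda\setminus \Lambda(R);k)\to 0$$
displayed just before the corollary produces the usual long exact sequence in $H^{\kbb}_{(1)}$, and the corollary is exactly what remains once one inserts the vanishing of $H^q_{(1)}(\Lambda;k)$ for $q\geq 2$.

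First I would note that since $C^q_{(i)}$ is only defined for $q\geq i$, each of the three complexes above has zero term in degree $0$. Consequently the long exact sequence begins in degree $1$, with no $H^0$ term on the left, giving
$$0 \to H^1_{(1)}(\Lambda,\Lambda\setminus \Lambda(R);k) \to H^1_{(1)}(\Lambda;k) \to H^1_{(1)}(\Lambda\setminus \Lambda(R);k) \xrightarrow{d} H^2_{(1)}(\Lambda,\Lambda\setminus \Lambda(R);k) \to H^2_{(1)}(\Lambda;k) \to \cdots,$$
where $d$ is the usual connecting homomorphism, obtained by lifting a cocycle from $C^n_{(1)}(\Lambda\setminus \Lambda(R);k)$ to $C^n_{(1)}(\Lambda;k)$, applying the differential, and interpreting the result in $C^{n+1}_{(1)}(\Lambda,\Lambda\setminus \Lambda(R);k)$.

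Next I would apply Proposition \ref{smooth} to the apex face $\tau=0$, which is trivially smooth. Since $\tau^\vee = M_{\RR}$ yields $K^R_0 = \Lambda$, the proposition gives $H^q_{(1)}(\Lambda;k)=0$ for all $q\geq 2$. Inserting this vanishing into the long exact sequence immediately produces both claims: the term to the right of $H^2_{(1)}(\Lambda,\Lambda\setminus \Lambda(R);k)$ is zero, yielding exactness of the displayed four-term sequence, while for $n\geq 2$ the flanking groups $H^n_{(1)}(\Lambda;k)$ and $H^{n+1}_{(1)}(\Lambda;k)$ both vanish, forcing the connecting map $d$ to be an isomorphism $H^n_{(1)}(\Lambda\setminus \Lambda(R);k)\cong H^{n+1}_{(1)}(\Lambda,\Lambda\setminus \Lambda(R);k)$.

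There is essentially no obstacle here; the only subtle points are remembering that the Harrison complex vanishes in degree $0$ (so no $H^0_{(1)}$ appears on the left), and interpreting $\Lambda$ itself as $K^R_0$ with $\tau=0$ so that Proposition \ref{smooth} applies.
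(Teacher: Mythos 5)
Your proposal is correct and is essentially the paper's own argument: the paper likewise deduces the corollary from the displayed short exact sequence of complexes together with the vanishing $H^q_{(1)}(\Lambda;k)=0$ for $q\geq 2$ from Proposition \ref{smooth}, via the long exact cohomology sequence, with $d$ the connecting map (extend by zero, then apply the differential). Your extra remarks about the complexes vanishing in degree $0$ and about $\Lambda=K^R_0$ for the smooth apex face are exactly the implicit points in the paper's one-line proof.
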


\begin{remark}
Here with the map $d$ we mean that we first extend a function from $\Lambda\setminus \Lambda(R)$ to the whole of $\Lambda$ by $0$ and then we apply our differential $d$. Both maps we will denote by $d$ and the meaning will be clear from the context.
\end{remark}

\begin{remark}
Let $L_0\subset L$ be as in Definition \ref{def monf}. Elements in $C^1_{(1)}(L;k)=\{f:L\to k\}$ are functions on $L$. If we additionally have $f\in C^1_{(1)}(L,L\setminus L_0;k),$ then a function $f$ vanishes on  $L\setminus L_0$. Restricting a function $f\in C^1_{(1)}(L;k)$ to some monoid-like subset $K\subset L$ means that we look on $f$ as an element in $C^1_{(1)}(K;k)$. Immediately from definition we obtain that $H^1_{(1)}(L;k)$ is the space of functions $f\in C^1_{(1)}(L;k)$ such that $f(a+b)=f(a)+f(b)$ if $a+b\in L$. Thus we call elements from $H^1_{(1)}(L;k)$ \emph{additive functions on $L$}.
\end{remark}

Let $\xi$ be an element from $H_{(1)}^1(\Lambda\setminus \Lambda(R);k)$. We extend (not additively) $\xi$ to the whole of $\Lambda$ by $0$ (i.e.\ $\xi(\lam)=0$ for $\lam\in \Lambda(R))$. This extended function we denote by $\xi^0\in C^1_{(1)}(\Lambda;k)$.
We have $T^{1,-R}(A)\cong H_{(1)}^2(\Lambda,\Lambda\setminus \Lambda(R);k)$ by Proposition \ref{prop 1} and the surjective map $$H_{(1)}^{1}(\Lambda\setminus \Lambda(R);k)\xrightarrow{d} H_{(1)}^{2}(\Lambda,\Lambda\setminus \Lambda(R);k)$$ by Corollary \ref{prop d}. Thus we see that every element of $T^{1}(-R)\cong H^2_{(1)}(-R)$ can be written as $\overline{d\xi^0}$ for some $\xi\in H_{(1)}^1(\Lambda\setminus \Lambda(R);k)$.
Here $\overline{d\xi^0}$ denotes the cohomology class of $d\xi^0\in C^2_{(1)}(\Lambda,\Lambda\setminus \Lambda(R);k)$.

\begin{example}\label{ex tri}
Continuing with Example \ref{ex tri intro},
let $\xi\in H^1_{(1)}(\Lambda\setminus \Lambda(R);k)$, with 
$$\xi(r_1,r_2)=\left\{
\begin{array}{ll}
r_2&\text{ if }(r_1,r_2)\in K^{R}_{a_2}\\
0&\text{ if }(r_1,r_2)\in K^{R}_{a_1}.
\end{array}
\right.
$$
Since $K^{R}_{a_1}\cap K^{R}_{a_2}=\emptyset$, $\xi$ is well defined.
 Note that $R\not\in \Lambda\setminus \Lambda(R)$. For $\xi^0\in C^1_{(1)}(\Lambda;k)$ we have $\xi^0(R)=0$.
\end{example}

From Lemma \ref{lem ger pr} recall the Gerstenhaber product for $i=1$, called the (Harrison) cup product. 

\begin{construction}\label{cons 1}
Let $R,S\in M$ and let $\xi$ and $\mu$ be elements from $H^1_{(1)}(\Lambda\setminus \Lambda(R);k)$ and $H^1_{(1)}(\Lambda\setminus \Lambda(S);k)$, respectively. 
The Harrison cup product $$\overline{[d\xi^0,d\mu^0]}\in H^3_{(1)}(\Lambda,\Lambda \setminus \Lambda(R+S);k)\cong T^{2,-R-S}(A)$$ can be seen as the cohomology class of $[d\xi^0,d\mu^0]\in C_{(1)}^3(\Lambda;k)$ in $\tot^\kbb[C^\kbb_{(1)}(K^{R+S}_{\kbb};k)]$  by Proposition \ref{th 23} (recall that $K^{R+S}_0=\Lambda$).

We can find an element $C\in C^2_{(1)}(\Lambda;k)$ such that $dC=[d\xi^0,d\mu^0]\in C^3_{(1)}(\Lambda;k)$ since $H^3_{(1)}(\Lambda;k)=0$ by Proposition \ref{smooth}. 

We inject $C$ into $(C_1,...,C_N)\in \oplus_{j}C_{(1)}^2(K^{R+S}_{a_j};k)$. There exist functions $G_j\in C^1_{(1)}(K^{R+S}_{a_j};k)$ for $j=1,...,N$, such that $dG_j=C_j$. Indeed, $H^2_{(1)}(K^{R+S}_{a_j};k)=0$ for all $j=1,...,N$ by Proposition \ref{smooth}. 

Let us denote $G:=(G_1,...,G_N)\in \oplus_jC^1_{(1)}(K^{R+S}_{a_j};k)$. Recall the definition of the map $\delta$ from \eqref{eq delta}. By the above construction  $\overline{\delta G}=\overline{[d\xi^0,d\mu^0]}\in \tot^\kbb[C^\kbb_{(1)}(K^{R+S}_{\kbb};k)]$. 
\end{construction}

In the following we try to find the function $G$ explicitly. Then $$\delta G\in \bigoplus_{\tau\leq \sigma, \dim\tau=2}H^1_{(1)}(K^{R+S}_\tau;k).$$ The latter space is easier to work with, which will give us a nice description of the cup product and thus bring us closer to understanding the versal base space.

\begin{proposition}\label{prop sle}
We define $C\in C^2_{(1)}(\Lambda;k)$ as
$$C(\lam_1,\lam_2):=$$
$$\xi^0(\lam_1)\mu^0(\lam_2)+\xi^0(\lam_2)\mu^0(\lam_1)-d\xi^0(\lam_1,\lam_2)\mu^0(-R+\lam_1+\lam_2)-d\mu^0(\lam_1,\lam_2)\xi^0(-S+\lam_1+\lam_2),$$
where $\mu^0(-R+\lam_1+\lam_2):=0$ (resp. $\xi^0(-S+\lam_1+\lam_2):=0$) if $-R+\lam_1+\lam_2\not\in \Lambda$ (resp. $-S+\lam_1+\lam_2\not\in \Lambda$).
It holds that 
$[d\xi^0,d\mu^0]=dC.$

\end{proposition}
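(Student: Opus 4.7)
The plan is to verify $[d\xi^0,d\mu^0]=dC$ by direct cochain-level computation, unfolding both sides as functions of $(\lambda_1,\lambda_2,\lambda_3)$ and matching them term by term. The motivating picture is the classical Leibniz formula: for a would-be cup product of 1-cochains, $d(\xi^0\cup\mu^0+\mu^0\cup\xi^0)$ reproduces a commutator of the form $d\xi^0\cup\mu^0 + \mu^0\cup d\xi^0 - \xi^0\cup d\mu^0 - d\mu^0\cup\xi^0$, and the two correction terms in the definition of $C$ are precisely the adjustments that turn this into the shifted four-term expression appearing in Lemma \ref{lem ger pr}.

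First I would decompose $C=C_1+C_2-C_3-C_4$ into the four summands from the proposition and expand
\[
dC(\lambda_1,\lambda_2,\lambda_3)=C(\lambda_2,\lambda_3)-C(\lambda_1+\lambda_2,\lambda_3)+C(\lambda_1,\lambda_2+\lambda_3)-C(\lambda_1,\lambda_2)
\]
for each piece separately. For the symmetric part $C_1+C_2=\xi^0(\lam_1)\mu^0(\lam_2)+\xi^0(\lam_2)\mu^0(\lam_1)$, a routine expansion using $d\xi^0(\alpha,\beta)=\xi^0(\alpha)+\xi^0(\beta)-\xi^0(\alpha+\beta)$ and the analogous formula for $d\mu^0$ produces terms of three types: pure products $\xi^0(\lambda_i)\mu^0(\lambda_j)$, products of the form $\xi^0(\lambda_i)\,d\mu^0(\lambda_j,\lambda_k)$, and their $\xi\leftrightarrow\mu$ mirrors. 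I would organise the bookkeeping so that pure products cancel in pairs and only the mixed $d\xi^0\cdot\mu^0,\,\xi^0\cdot d\mu^0$ combinations survive.

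Next I would handle the correction terms $C_3,C_4$. For $C_3(\alpha,\beta)=d\xi^0(\alpha,\beta)\mu^0(-R+\alpha+\beta)$, the key point is that $d\xi^0(\alpha,\beta)\neq 0$ forces $\alpha+\beta\in\Lambda(R)$ (since $\xi^0$ vanishes on $\Lambda(R)$ and restricts to an additive function on $\Lambda\setminus\Lambda(R)$), so the convention $\mu^0(-R+\alpha+\beta)=0$ outside $\Lambda$ never interferes. Applying $d$ and collecting the four shifted factors, I would use the cocycle identity $d(d\xi^0)=0$ in the form
\[
d\xi^0(\lambda_2,\lambda_3)-d\xi^0(\lambda_1+\lambda_2,\lambda_3)+d\xi^0(\lambda_1,\lambda_2+\lambda_3)-d\xi^0(\lambda_1,\lambda_2)=0
\]
to fold the four products into two terms of the shape $d\xi^0(-R+\lambda_1+\lambda_2,\lambda_3)\,\mu^0(\lambda_3)$ and similar. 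The symmetric treatment of $C_4$ yields the $-S$-shifted counterparts.

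The final step is to match the leftover collection with the four-term bracket formula from Lemma \ref{lem ger pr}: the surviving $d\xi^0\cdot d\mu^0$ products (once the $\xi^0,\mu^0$ factors are reassembled into $d\mu^0,d\xi^0$ via their defining three-term expansions) should exactly reproduce
\[
d\xi^0(-S+\lam_1+\lam_2,\lam_3)\,d\mu^0(\lam_1,\lam_2)-d\xi^0(\lam_1,-S+\lam_2+\lam_3)\,d\mu^0(\lam_2,\lam_3)
\]
together with its $R\leftrightarrow S$, $\xi\leftrightarrow\mu$ symmetric partners. The main obstacle is purely combinatorial: tracking on the order of thirty individual monomials through two coordinated $d\xi^0\leftrightarrow\xi^0$ rewriting passes without sign or shift errors. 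The right strategy is to tabulate the terms by the pattern $\{\text{arguments of }\xi^0\}\times\{\text{arguments of }\mu^0\}$ so that the cancellations and recombinations become mechanical rather than intuitive.
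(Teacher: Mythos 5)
Your plan is correct, and the computation it outlines does close. Note that the paper itself gives no argument here: its proof of Proposition \ref{prop sle} is just a citation of Sletsj\o e \cite[Theorem 4.8]{sle}, so your direct cochain-level verification is a self-contained replacement for that reference rather than a departure from anything printed in the paper. The three checkpoints you single out are exactly the ones that make the computation legitimate: (i) in $d(C_1+C_2)$ the pure products cancel and one is left with $\mu^0(\lam_3)d\xi^0(\lam_1,\lam_2)+\xi^0(\lam_3)d\mu^0(\lam_1,\lam_2)-\xi^0(\lam_1)d\mu^0(\lam_2,\lam_3)-\mu^0(\lam_1)d\xi^0(\lam_2,\lam_3)$; (ii) in $dC_3$ the two terms carrying the factor $\mu^0(-R+\lam_1+\lam_2+\lam_3)$ share that factor, so the cocycle identity $d(d\xi^0)=0$ may be applied to their $d\xi^0$ coefficients; (iii) the observation that $d\xi^0(\alpha,\beta)\neq 0$ forces $\alpha+\beta\in\Lambda(R)$ (because $\Lambda(R)+\Lambda\subset\Lambda(R)$ and $\xi$ is additive on $\Lambda\setminus\Lambda(R)$) is precisely what makes the rewriting $\mu^0(-R+\lam_1+\lam_2+\lam_3)-\mu^0(-R+\lam_1+\lam_2)=\mu^0(\lam_3)-d\mu^0(-R+\lam_1+\lam_2,\lam_3)$ legitimate, since it is only ever used multiplied by the nonvanishing factor $d\xi^0(\lam_1,\lam_2)$, so the extension-by-zero convention never interferes. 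After these steps the leftover terms from $-dC_3-dC_4$ are already the four summands of $[d\xi^0,d\mu^0]$ from Lemma \ref{lem ger pr}, with the mixed $\xi^0\cdot d\mu^0$ and $\mu^0\cdot d\xi^0$ terms from (i) cancelling against those produced in (ii)--(iii); no further reassembly of $\xi^0,\mu^0$ factors into differentials is needed, so the bookkeeping is considerably lighter than the ``thirty monomials'' you anticipate.
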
  
\begin{proof}
See \cite[Theorem 4.8]{sle}.
\end{proof}

Sletsj\o e \cite{sle} claimed that Proposition \ref{prop sle} gives us a nice cup product formula, but unfortunately there is a mistake in his paper: in \cite{sle} it was written that only the first two terms of $C(\lam_1,\lam_2)$ matter for the computations of the cup product formula and that the other two vanish with $d$. 
This is not correct since
$d\xi^0\not\in C^2_{(1)}(\Lambda,\Lambda\setminus\Lambda(R+S);k)$, which was wrongly assumed in the paper. We only have $d\xi^0\in C^2_{(1)}(\Lambda,\Lambda\setminus\Lambda(R);k)$.

Thus we need to consider $C(\lam_1,\lam_2)$ with all 4 terms and we will try to simplify this using the  double complex $C_{(1)}^{\kbb}(K^R_{\kbb};k)$ (see the equation \eqref{eq im double}).

Following Construction \ref{cons 1} we now inject $C$ into $(C_1,...,C_N)\in \oplus_{j}C_{(1)}^2(K^{R+S}_{a_j};k)$. In the following  we will define the functions $h_j$ which will serve as a first approximation of the functions $G_j$ from Construction \ref{cons 1}, i.e. $dh_j(\lam_1,\lam_2)=C_j(\lam_1,\lam_2)$ "almost" holds (we will be more precise later).

For each $j=1,...,N$ we choose $\tilde{\xi}_j\in (M\otimes_\ZZ k)^*$ such that $\tilde{\xi}_j$ restricted to $K^R_{a_j}$ equals $\xi$, i.e. $\tilde{\xi}_j=\xi$ as elements in $H^1_{(1)}(K^R_{a_j};k)$. Note that this is always possible since $H^1_{(1)}(K^R_{a_j};k)$ is isomorphic to $(\spa_k(K^R_{a_j}))^*$, which is the space of $k$-linear functions on $\spa_k(K^R_{a_j})$ (see \cite[Proposition 4.2]{klaus}; for the description of $\spa_k(K^R_{a_j})$ see the discussion after Theorem \ref{th end}). 
Note also that $K^R_{a_j}\subset \Lambda\setminus \Lambda(R)$ so it makes sense to consider $\xi$ restricted to $K^R_{a_j}$.
If  $\lan a_j,R\ran=0$ holds, then a choice of $\tilde{\xi}_j$ is not unique. In the same way we define $\tilde{\mu}_j$.

We define $\xi_j$, $\mu_j\in C^1_{(1)}(M;k)$ as follows:
$$
\xi_j(\lam):=\left\{
\begin{array}{ll}
\tilde{\xi}_j(\lam)&\text{ if }\lam\in K_{a_j}^{R+S}\\
0&\text{ otherwise }
\end{array}
\right.
~~~
\mu_j(\lam):=\left\{
\begin{array}{ll}
\tilde{\mu}_j(\lam)&\text{ if }\lam\in K_{a_j}^{R+S}\\
0&\text{ otherwise. }
\end{array}
\right.
$$
Note that by construction $\xi_j$ and $\mu_j$ are additive functions on $K_{a_j}^{R+S}$ (i.e. for $\lam_1,\lam_2\in K_{a_j}^{R+S}$ we have $\xi_j(\lam_1)+\xi_j(\lam_2)=\xi_j(\lam_1+\lam_2)$ if $\lam_1+\lam_2\in K_{a_j}^{R+S}$ and similarly for $\mu_j$).
Moreover, for each $j=1,...,N$ we define $\xi^0_j, \mu_j^0\in C^1_{(1)}(M;k)$ as

$$
\xi_j^0(\lam):=\left\{
\begin{array}{ll}
\xi_j(\lam)&\text{ if }\lam\in K^{R+S}_{a_j}\cap\big(\Lambda \setminus \Lambda(R))\\
0&\text{ otherwise, }
\end{array}
\right.
$$
$$
\mu_j^0(\lam):=\left\{
\begin{array}{ll}
\mu_j(\lam)&\text{ if }\lam\in K^{R+S}_{a_j}\cap\big(\Lambda \setminus \Lambda(S))\\
0&\text{ otherwise. }
\end{array}
\right.
$$
Note that for $\lam\in \Lambda$ we have
$$
\xi_j^0(\lam)=\left\{
\begin{array}{ll}
\xi_j(\lam)&\text{ if }\lam\in \big(K^{R+S}_{a_j}\setminus K_{a_j}^R\big)\cap\big(\cup_{k;k\ne j}K^{R}_{a_k}\big)\\
\xi^0(\lam)&\text{ otherwise }
\end{array}
\right.
$$
and similarly for $\mu_j^0$. This description explains the notation. It will be used in Example \ref{ex dod3} and Remark \ref{rem th}.

\begin{example}\label{ex dod3}
Continuing with Example \ref{ex tri} we see that $\tilde{\xi}_1$ and $\tilde{\xi}_2$ are unique in this case and we have $\tilde{\xi}_2(\lam_1,\lam_2)=\lam_2$ and $\tilde{\xi}_1(\lam_1,\lam_2)=0$. Moreover, let us choose 
$S=R=(0,1)$. We see that 
$K^{2R}_{a_2}\cap K^R_{a_1}=\big(K^{2R}_{a_2}\setminus K^R_{a_2}\big)\cap K^R_{a_1}=\{(1,1)\}$ and thus we have  $\xi_2^0(\lam)=\xi^0(\lam)$ for all $\lam\in \Lambda$ except $(1,1)$, for which $\xi^0_2(1,1)=\xi_2(1,1)=1$ and $\xi^0(1,1)=\xi(1,1)=0$.
\end{example}

For each $j=1,...,N$ we define the function $h_j\in C^1_{(1)}(K^{R+S}_{a_j};k)$ as
\begin{equation}\label{eq hj}
h_j(\lam):=-\xi_j(\lam)\cdot \mu_j(\lam)+\xi_j(-S+\lam)\mu_j(\lam)+\mu_j(-R+\lam)\xi_j(\lam)
\end{equation}

and $C^0_j\in C^2_{(1)}(K^{R+S}_{a_j};k)$ as
$$C^0_j(\lam_1,\lam_2):=$$
$$\xi_j^0(\lam_1)\mu_j^0(\lam_2)+\xi_j^0(\lam_2)\mu_j^0(\lam_1)-d\xi_j^0(\lam_1,\lam_2)\mu_j^0(-R+\lam_1+\lam_2)-d\mu_j^0(\lam_1,\lam_2)\xi_j^0(-S+\lam_1+\lam_2).$$

The following proposition is very surprising and it is crucial for later construction of the cup product.

We consider the lattice $M$ as a partially ordered set where positive elements lie in $\Lambda$. Thus if for $\lam\in \Lambda$ we write $\lam\geq R$, it means that $\lam\in \Lambda(R)=\Lambda+R$. 
Equivalently, if for $\lam\in \Lambda$ we write $\lam\not\geq R$, it means that $\lam\in \Lambda\setminus \Lambda(R)$. 
\begin{proposition}\label{prop cup product}
It holds that
\begin{equation}\label{moj th}
d(h_j)=C^0_j\in C^2_{(1)}(K_{a_j}^{R+S};k).
\end{equation}
\end{proposition}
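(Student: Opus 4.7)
The plan is to verify \eqref{moj th} by direct expansion at any fixed pair $(\lam_1,\lam_2) \in S_2(K_{a_j}^{R+S})$. The fundamental input is that $\xi_j, \mu_j$ are the restrictions to $K_{a_j}^{R+S}$ (and zero elsewhere) of the linear functionals $\tilde\xi_j, \tilde\mu_j \in (M\otimes_\ZZ k)^*$, hence additive on any triple contained in $K_{a_j}^{R+S}$, while their linear extensions obey $\tilde\xi_j(-T+\lam) = \tilde\xi_j(-T) + \tilde\xi_j(\lam)$ for all $T,\lam \in M$. A useful preliminary observation is that for $\lam \in K_{a_j}^{R+S}$ and $T\in\{R,S\}$, the shifted point $-T+\lam$ lies in $K_{a_j}^{R+S}$ whenever $-T+\lam \in \Lambda$, since $\lan a_j, -T+\lam\ran \le \lan a_j, \lam\ran < \lan a_j, R+S\ran$.

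First I would split $dh_j(\lam_1,\lam_2) = h_j(\lam_1) + h_j(\lam_2) - h_j(\lam_1+\lam_2)$ into three blocks matching the three summands of \eqref{eq hj}. The block from $-\xi_j\mu_j$ collapses, by additivity together with bilinearity of $\tilde\xi_j\otimes\tilde\mu_j$, to $\xi_j(\lam_1)\mu_j(\lam_2) + \xi_j(\lam_2)\mu_j(\lam_1)$. For the block from $\xi_j(-S+\lam)\mu_j(\lam)$, substituting $\tilde\xi_j(-S+\lam) = \tilde\xi_j(-S) + \tilde\xi_j(\lam)$ (whenever $-S+\lam \in \Lambda$, and $0$ otherwise) makes the constant $\tilde\xi_j(-S)$ multiply a $d\mu_j$-type alternating sum, while the linear part produces a bilinear remainder weighted by the indicators that $\lam_i \ge S$. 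The third block is handled symmetrically, with $R,S$ and $\xi,\mu$ swapped.

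On the other side, I would expand $C^0_j(\lam_1,\lam_2)$ by writing out $d\xi_j^0, d\mu_j^0$ via \eqref{def dif d} and using the description $\xi_j^0(\lam) = \xi_j(\lam)$ if $\lam \not\ge R$ and $\xi_j^0(\lam) = 0$ if $\lam \ge R$ (and analogously for $\mu_j^0$); on $K_{a_j}^{R+S}$ this simply removes the contributions supported on $\Lambda(R)$, respectively $\Lambda(S)$. Applying $\tilde\mu_j(-R+\lam) = -\tilde\mu_j(R)+\tilde\mu_j(\lam)$ to the factor $\mu_j^0(-R+\lam_1+\lam_2)$ (which is nonzero only when $-R+\lam_1+\lam_2 \in \Lambda\setminus\Lambda(S)$) reassembles the bilinear-remainder piece of the $S$-block of $dh_j$ into exactly $-d\xi_j^0(\lam_1,\lam_2)\mu_j^0(-R+\lam_1+\lam_2)$; symmetrically for the $R$-block. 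The two leftover pieces $\xi_j(\lam_1)\mu_j(\lam_2)+\xi_j(\lam_2)\mu_j(\lam_1)$ match $\xi_j^0(\lam_1)\mu_j^0(\lam_2)+\xi_j^0(\lam_2)\mu_j^0(\lam_1)$ after absorbing the $\tilde\xi_j(-S)$- and $\tilde\mu_j(-R)$-proportional constants.

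The main obstacle is the bookkeeping: the identity only becomes transparent after splitting into subcases according to whether each of $\lam_1,\lam_2,\lam_1+\lam_2$ lies in $\Lambda(R)$ or $\Lambda(S)$, since these conditions govern where the truncations $\xi_j^0, \mu_j^0$ vanish and where $\xi_j(-S+\cdot), \mu_j(-R+\cdot)$ activate. In every subcase, both sides reduce via the linearity of $\tilde\xi_j,\tilde\mu_j$ to the same polynomial combination of the values $\tilde\xi_j(\lam_i), \tilde\mu_j(\lam_i), \tilde\xi_j(S), \tilde\mu_j(R)$, establishing \eqref{moj th}.
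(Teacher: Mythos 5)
Your proposal is correct and follows essentially the same route as the paper's own proof: a pointwise verification of $d(h_j)=C^0_j$ on $S_2(K_{a_j}^{R+S})$ by expanding both sides via \eqref{def dif d} and splitting into cases according to whether $\lam_1,\lam_2,\lam_1+\lam_2$ lie in $\Lambda(R)$ or $\Lambda(S)$, using linearity of $\tilde{\xi}_j,\tilde{\mu}_j$. One small caveat: the justification of your preliminary observation tacitly assumes $\lan a_j,R\ran\geq 0$ and $\lan a_j,S\ran\geq 0$ (otherwise $\lan a_j,-T+\lam\ran\leq \lan a_j,\lam\ran$ can fail), but the paper's computation relies on the same additivity at the shifted points without comment, and in the Gorenstein applications with $R=S=R^*$ this assumption holds.
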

\begin{proof}
From the definition of the differential $d$ in \eqref{def dif d} we have
\begin{align*}
&d(h_j)(\lam_1,\lam_2)=\\
&-\xi_j(\lam_2)\mu_j(\lam_2)+\xi_j(-S+\lam_2)\mu_j(\lam_2)+\mu_j(-R+\lam_2)\xi_j(\lam_2)-\\
&\big( -\xi_j(\lam_1+\lam_2)\mu_j(\lam_1+\lam_2)+\xi_j(-S+\lam_1+\lam_2)\mu_j(\lam_1+\lam_2)+\mu_j(-R+\lam_1+\lam_2)\xi_j(\lam_1+\lam_2) \big)\\
&-\xi_j(\lam_1)\mu_j(\lam_1)+\xi_j(-S+\lam_1)\mu_j(\lam_1)+\mu_j(-R+\lam_1)\xi_j(\lam_1).
\end{align*}

 Recall that by the definition of $C^2_{(1)}(K_{a_j}^{R+S};k)$ we need to verify that $d(h_j)(\lam_1,\lam_2)=C^0_j(\lam_1,\lam_2)$ holds  for those $(\lam_1,\lam_2)\in K_{a_j}^{R+S}\times K_{a_j}^{R+S}$ such that $\lam_1+\lam_2\in K_{a_j}^{R+S}$.

\begin{enumerate}

\item $\lambda_1\not\geq R,S$ and $\lam_2\not\geq R,S$:

\noindent
Note that in this case $\lam_1,\lam_2\in \Lambda\setminus \Lambda(R),\Lambda\setminus \Lambda(S)$. Thus 
 by definition $\xi^0_j(\lam_k)=\xi_j(\lam_k)$ and $\mu^0_j(\lam_k)=\mu_j(\lam_k)$ hold for $k=1,2$. We consider now the following subcases.

\begin{itemize}

\item $\lam_1+\lam_2\geq R,S$:

In this subcase we have $dh_j(\lam_1,\lam_2)=\xi_j(\lam_1)\mu_j(\lam_2)+\xi_j(\lam_2)\mu_j(\lam_1)-\xi_j(-S+\lam_1+\lam_2)\big(\mu_j(\lam_1)+\mu_j(\lam_2)\big )-\mu_j(-R+\lam_1+\lam_2)\big(\xi_j(\lam_1)+\xi_j(\lam_2)\big)$. 

Moreover, $d\xi^0_j(\lam_1,\lam_2)=\xi_j(\lam_1)+\xi_j(\lam_2)$ since $\xi^0_j(\lam_1+\lam_2)=0$. Similarly, $d\mu^0_j(\lam_1,\lam_2)=\mu_j(\lam_1)+\mu_j(\lam_2)$ and thus the equality $dh_j=C^0_j$ follows.

\item $\lam_1+\lam_2\geq R$, $\lam_1+\lam_2\not \geq S$:

$dh_j(\lam_1,\lam_2)=\xi_j(\lam_1)\mu_j(\lam_2)+\xi_j(\lam_2)\mu_j(\lam_1)-\mu_j(-R+\lam_1+\lam_2)\big(\xi_j(\lam_1)+\xi_j(\lam_2)\big).$

Moreover, $d\xi^0_j(\lam_1,\lam_2)=\xi_j(\lam_1)+\xi_j(\lam_2)$ and $d\mu^0_j(\lam_1,\lam_2)=0$ from which the equality $dh_j=C^0_j$ follows.
\item $\lam_1+\lam_2\not\geq R$, $\lam_1+\lam_2\geq S$:

$dh_j(\lam_1,\lam_2)=\xi_j(\lam_1)\mu_j(\lam_2)+\xi_j(\lam_2)\mu_j(\lam_1)-\xi_j(-S+\lam_1+\lam_2)\big(\mu_j(\lam_1)+\mu_j(\lam_2)\big)$. 

\item $\lam_1+\lam_2\not\geq R,S:$

$dh_j(\lam_1,\lam_2)=\xi_j(\lam_1)\mu_j(\lam_2)+\xi_j(\lam_2)\mu_j(\lam_1).$

\end{itemize}
In the last two cases we conclude that $dh_j=C^0_j$ holds in a similar way.

\item $\lambda_1\not\geq R,S$ and $\lam_2\geq R,S:$

We have $\xi_j(\lam_1)=\xi^0_j(\lam_1)$ and $\mu_j(\lam_1)=\mu^0_j(\lam_1)$. Note that these equalities does not necessarily hold for $\lam_2$.
We also know that $\lam_1+\lam_2\geq R,S$ and thus we can easily check that
$dh_j(\lam_1,\lam_2)=
-\mu_j(\lam_1)\big(\xi_j(\lam_1) +\xi_j(-S+\lam_2) \big)-\xi_j(\lam_1)\big( \mu_j(-R+\lam_2)+\mu_j(\lam_1)\big).
$
On the other hand we have 
$C^0_j(\lam_1,\lam_2)=-\xi_j^0(\lam_1)\big( \mu_j^0(-R+\lam_2)+\mu_j^0(\lam_1) \big)-\mu_j^0(\lam_1)\big( \xi_j^0(-S+\lam_2)+\xi_j^0(\lam_1) \big)$. Since $\lam_2\in K_{a_j}^{R+S}$ we have $-R+\lam_2\not\geq S$ and $-S+\lam_2\not\geq R$ and thus $\mu^0_j(-R+\lam_2)=\mu_j(-R+\lam_2)$ and $\xi^0_j(-S+\lam_2)=\xi_j(-S+\lam_2)$. 
It follows that the equality $dh_j=C^0_j$ is also satisfied in this case.

\item 
Similarly as above we can check that the equality \eqref{moj th} is satisfied also in the remaining cases.
\end{enumerate}
\end{proof}

\begin{remark}\label{rem th}
By definition $\xi^0_j(\lam)$ and $\xi^0(\lam)$ can be for $\lam \in K^{R+S}_{a_j}$ different only for $\lam \in \big(K^{R+S}_{a_j}\setminus K^R_{a_j}\big)\cap(\cup_{k;k\ne j} K^R_{a_k})$. The latter space is "not large" (see Example \ref{ex dod3} where it is just a point for $j=2$; later we will also see examples when it is empty).
Similarly, $\mu^0_j(\lam)$ and $\mu^0(\lam)$ can be on $K^{R+S}_{a_j}$ different only for $\lam \in \big(K^{R+S}_{a_j}\setminus K^S_{a_j}\big)\cap(\cup_{k;k\ne j} K^S_{a_k})$. 
Using the notation $C_j$ from Construction \ref{cons 1} we see by the definition that $C_j(\lam_1,\lam_2)=C_j^0(\lam_1,\lam_2)$ if $\xi^0_j(\lam)=\xi^0(\lam)$ and 
$\mu^0_j(\lam)=\mu^0(\lam)$ for $\lam\in K^{R+S}_{a_j}$. 
In Proposition \ref{prop cup product} we verified that $dh_j=C^0_j$ holds, which gives us that $C_j-dh_j$ has many zeros (since $\big(K^{R+S}_{a_j}\setminus K^R_{a_j}\big)\cap(\cup_{k;k\ne j} K^R_{a_k})$ and $\big(K^{R+S}_{a_j}\setminus K^{S}_{a_j}\big)\cap(\cup_{k;k\ne j} K^S_{a_k})$ are "not large"). 
Thus it is easier to find $F_j\in C^1_{(1)}(K^{R+S}_{a_j};k)$ such that $dF_j=C_j-dh_j$ (note that such $F_j$ exists since $H^2_{(1)}(K^{R+S}_{a_j};k)=0$ by Proposition \ref{smooth}). We then define $G_j:=F_j+h_j$ and proceed with Construction \ref{cons 1} to obtain a nice description of the cup product.
\end{remark}
In the next section we will explicitly find the functions $F_j$ (and thus also the functions $G_j$ from Construction \ref{cons 1}) in the special case of Gorenstein toric varieties.

\section{The cup product of affine Gorenstein toric varieties}\label{sec def gor toric var}

Recall that
toric Gorenstein varieties
are obtained by putting a lattice polytope $P\subset \AA$ into the affine hyperplane $\AA\times \{1\}\subset \AA\times \RR=:N_{\RR}$ and defining $\sigma:=\text{Cone}(P)$, the cone over $P$. Then the canonical degree $R^*\in M$ equals $(\underline{0},1)$.

\begin{definition} 
We define the vector space $V\subset \RR^N$ by 
\begin{equation}\label{eq gen v}
V:=V(P):=\{(t_1,...,t_N)~|~\sum_jt_j\epsilon_jd_j=0~|~\text{for every }2\text{-face }\epsilon\leq P\},
\end{equation}
where $\un{\epsilon}=(\epsilon_1,...,\epsilon_N)\in \{0,\pm 1\}^N$ is the sign vector of $\epsilon$ (see \cite[Definition 2.1]{alt3}). \end{definition}

\begin{proposition}\label{prop descr t1}
For Gorenstein toric varieties it holds that $T^1(-R^*)\cong V/k\cdot \underline{1}$. 
\end{proposition}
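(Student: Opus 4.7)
The plan is to compute $T^1(-R^*) = T^{1,-R^*}_{(1)}(A)$ via Theorem~\ref{th 24} with $i=1$: it equals the first cohomology of
\[
H^1_{(1)}(\Lambda;k)\lra \bigoplus_{j} H^1_{(1)}(K^{R^*}_{a_j};k)\xrightarrow{\delta}\bigoplus_{\tau\leq\sigma,\,\dim\tau=2} H^1_{(1)}(K^{R^*}_\tau;k).
\]
The Gorenstein hypothesis gives $\lan a_j,R^*\ran=1$ for every ray, hence $K^{R^*}_{a_j}$ is exactly the lattice in the facet $F_j=\sigma^\vee\cap\{\lan a_j,\cdot\ran=0\}$, which spans the hyperplane $H_j\subset M_\RR$. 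Using the identification $H^1_{(1)}(K;k)\cong(\spa_k K)^*$ recalled in Section~3 from \cite{klaus}, I get
\[
H^1_{(1)}(K^{R^*}_{a_j};k)=(N\otimes k)/(k\cdot a_j),\qquad H^1_{(1)}(K^{R^*}_{\lan a_j,a_k\ran};k)=(N\otimes k)/(k\cdot a_j+k\cdot a_k),
\]
and $2$-dimensional faces $\lan a_j,a_k\ran\leq\sigma$ correspond bijectively to edges $e=\overline{v_jv_k}$ of $P$.

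The Gorenstein condition supplies a canonical lift: each class $\xi_j\in (N\otimes k)/(k\cdot a_j)$ has a unique representative $\tilde\xi_j\in N\otimes k$ with $\lan \tilde\xi_j,R^*\ran=0$, since $\lan a_j,R^*\ran=1$. A cocycle means $\tilde\xi_j-\tilde\xi_k\in k\cdot a_j+k\cdot a_k$ for every edge $e=\overline{v_jv_k}$; pairing this difference with $R^*$ forces the $a_j$- and $a_k$-coefficients to be negatives of each other, so
\[
\tilde\xi_j-\tilde\xi_k=t_e\,d_e,\qquad d_e:=a_k-a_j,
\]
defines a unique scalar $t_e\in k$. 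Telescoping $\sum_i \epsilon_i(\tilde\xi_{v_i}-\tilde\xi_{v_{i+1}})=0$ around the boundary of any $2$-face $\epsilon\leq P$ yields $\sum_i \epsilon_i t_{e_i}d_{e_i}=0$, so the assignment $\Phi:(\xi_j)\mapsto (t_e)$ maps $\ker\delta$ into $V$.

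I finish by showing $\Phi$ induces an isomorphism $T^1(-R^*)\xrightarrow{\sim}V/k\cdot\un 1$. For $\eta\in H^1_{(1)}(\Lambda;k)=N\otimes k$, the canonical lift of its class at ray $a_j$ is $\eta-\lan\eta,R^*\ran a_j$, and $\Phi$ sends this tuple to the constant vector $\lan\eta,R^*\ran\cdot\un 1$; hence $\Phi$ descends to the quotient and sends $\operatorname{im}H^1_{(1)}(\Lambda;k)$ exactly onto $k\cdot\un 1$. For surjectivity, given $(t_e)\in V$, fix a base vertex, set $\tilde\xi_{v_0}:=0$, and parallel-transport $\tilde\xi_k:=\tilde\xi_j-t_e d_e$ along edges; the $2$-face relations guarantee path-independence (the $1$-skeleton of $P$ is simply connected, with cycles generated by $2$-face boundaries), and $\lan d_e,R^*\ran=0$ preserves the normalization. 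For injectivity, if $t_e=c$ for every $e$, then $\tilde\xi_j+c\cdot a_j=\tilde\xi_k+c\cdot a_k$ along every edge implies that $\tilde\xi_j+c\cdot a_j=:\eta$ is vertex-independent, and $(\xi_j)$ is the diagonal image of $\eta$.

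The main obstacle is the normalization step: a priori, $\tilde\xi_j-\tilde\xi_k\in k\cdot a_j+k\cdot a_k$ has two degrees of freedom, and extracting a single well-defined edge scalar $t_e$ requires the Gorenstein equation $\lan a_j,R^*\ran=1$ to canonically fix the lifts. Once this is in place, the rest of the proof is a telescoping/parallel-transport argument on the $1$-skeleton of $P$.
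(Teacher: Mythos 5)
Your argument is correct and matches the approach behind the paper's citation: the paper's proof is simply a reference to \cite{alt}, and your normalized lifts $\tilde\xi_j$ with $\lan \tilde\xi_j,R^*\ran=0$ satisfying $\tilde\xi_j-\tilde\xi_k=t_e d_e$ are exactly the vectors $b_j\in (R^*)^{\perp}$ with $b_{j+1}-b_j=t_j d_j$ that the paper itself recalls from \cite{alt} when describing the isomorphism $f$ before Theorem \ref{prop cup iso}. The only small remark is that the identification $H^1_{(1)}(K^{R^*}_{\tau};k)\cong(\spa_k K^{R^*}_{\tau})^*$ for all faces $\tau$ is precisely what Theorem \ref{th end} packages in the Gorenstein case, so you could invoke it directly instead of Theorem \ref{th 24}.
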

\begin{proof}
See \cite{alt}.
\end{proof}

For simplicity we will assume that $X_{\sigma}=\spec(A)$ is a three-dimensional Gorenstein singularity given by a cone $\sigma=\lan a_1,...,a_N \ran$, where $a_1,...,a_N$ are arranged in a cycle ($n$-dimensional case for $n>3$ can be then treated with collecting information about all $2$-faces of $P$).

We define $a_{N+1}:=a_1$. 
Let us denote $d_j:=a_{j+1}-a_j$ and let 
\begin{equation}\label{eq defv}
V=\{\underline{t}=(t_1,...,t_N)\in k^N~|~\sum_{j=1}^Nt_jd_j=0\},
\end{equation}
which is the special case of \eqref{eq gen v} in the $3$-dimensional case.
 With $\ell(d_j)$ we will denote the lattice length of $d_j$.

\begin{remark}
Note that if $X_{\sigma}$ is isolated, we have $T^1(-R^*)=T^1$. In general $T^1$ is non-zero also in other degrees (see \cite[Theorem 4.4]{alt}).
\end{remark}

In the following we recall some results from \cite{klaus}, which even simplify the sequence appearing in Theorem \ref{th 24}.

The complex $H^1_{(1)}(K^R_\bullet;k)$ in Theorem \ref{th 24} (for $i=1$) has $(\spa_kK^R_\kbb)^*$
as a subcomplex. Here for $n\in \NN_0$ we define $(\spa_kK^R_n)^*:=\bigoplus_{\tau \leq \sigma,\dim\tau=n}(\spa_kK^R_\tau)^*$ and $(\spa_kK^R_\tau)^*$ denotes the space of linear functions on $\spa_kK^R_\tau$.
\begin{theorem}\label{th end}
 Assume that $X_\sigma=\spec(A)$ is Gorenstein. Then 
$$T^{k,-R}(A)=H^k\big((\spa_kK^R_\kbb)^* \big)$$
for $k=0,1,2$.
\end{theorem}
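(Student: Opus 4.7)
The plan is to invoke Theorem \ref{th 24} for $i=1$, which identifies $T^{k,-R}(A)$ with the $k$-th cohomology ($k=0,1,2$) of the complex $C^\kbb$ defined by $C^p=\bigoplus_{\tau\leq\sigma,\,\dim\tau=p}H^1_{(1)}(K^R_\tau;k)$ (with the convention $K^R_0:=\Lambda$). I would then compare $C^\kbb$ with its subcomplex $D^\kbb:=(\spa_k K^R_\kbb)^*$: linear functions on $\spa_k K^R_\tau$ are in particular additive, and the restriction maps appearing in the differential preserve linearity, so the inclusion $D^\kbb\hookrightarrow C^\kbb$ is a morphism of complexes. Setting $Q^\kbb:=C^\kbb/D^\kbb$, the long exact sequence in cohomology reduces the theorem to showing $H^p(Q^\kbb)=0$ for $p=0,1,2$.

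The vanishing of $Q^0$ and $Q^1$ is unconditional on the Gorenstein hypothesis. For $\tau=0$ one has $K^R_0=\Lambda$, and since $\Lambda$ generates $M$ as an abelian group every additive function on $\Lambda$ extends uniquely to a $k$-linear functional on $M\otimes_\ZZ k$; hence $D^0=C^0$. For each ray $\tau=\lan a_j\ran$, the equality $H^1_{(1)}(K^R_{a_j};k)=(\spa_k K^R_{a_j})^*$ comes from \cite[Proposition 4.2]{klaus}, as noted in the discussion following Example \ref{ex tri}. So $Q^0=Q^1=0$, which immediately gives $H^0(Q^\kbb)=H^1(Q^\kbb)=0$.

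What remains is $H^2(Q^\kbb)=0$, i.e.\ the injectivity of $Q^2\to Q^3$. For each $2$-face $\tau=\lan a_i,a_{i+1}\ran$ of $\sigma$, an element of $H^1_{(1)}(K^R_\tau;k)/(\spa_k K^R_\tau)^*$ is represented by an additive function on $K^R_\tau$ that fails to extend linearly. The plan is to exploit the Gorenstein structure $\sigma=\mathrm{Cone}(P)$, with $P$ a lattice polytope at height $1$: the height-$1$ geometry ensures that $K^R_\sigma=\bigcap_j K^R_{a_j}$ contains enough lattice points so that the restriction of any such non-linear class is non-trivial, and the sign-vector combinatorics $\un\epsilon$ (cf.\ \eqref{eq gen v}) prevents cancellation with the classes coming from the other $2$-faces adjacent to the rays $a_i$ or $a_{i+1}$.

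I expect the main obstacle to be precisely this final, combinatorial verification that no class in $Q^2$ is killed by the \v{C}ech-style differential $\delta$; the Gorenstein hypothesis is what makes the lattice-point structure of $K^R_\tau$ rigid enough to carry through the argument in the non-isolated case as well. Once injectivity of $Q^2\to Q^3$ is established, the long exact sequence yields $H^p(D^\kbb)\cong H^p(C^\kbb)\cong T^{p,-R}(A)$ for $p=0,1,2$, which is the statement of the theorem.
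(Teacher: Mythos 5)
Your reduction is set up correctly as far as it goes: the inclusion $(\spa_kK^R_\kbb)^*\hookrightarrow H^1_{(1)}(K^R_\kbb;k)$ is a map of complexes, the quotient complex (call it $Q^\kbb$) vanishes in degrees $0$ and $1$ (additive functions on $\Lambda$ are restrictions of linear ones since $\Lambda$ generates $M$, and for rays this is \cite[Proposition 4.2]{klaus}), and none of this uses the Gorenstein hypothesis. But the theorem's entire content is concentrated in the step you leave as a ``plan'': you never prove that $Q^2\to Q^3$ is injective, you only gesture at height-one geometry and the sign vectors of \eqref{eq gen v}. That is a genuine gap, not a routine verification. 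The specific heuristic you give even fails in easy situations: if some generator satisfies $\lan a_k,R\ran\leq 0$ then $K^R_\sigma=\bigcap_jK^R_{a_j}=\emptyset$, so $Q^3=0$ and your claimed injectivity becomes the assertion that \emph{every} additive function on $K^R_\tau$, for every two-dimensional face $\tau$, is already linear --- precisely the kind of statement that needs the Gorenstein structure and an actual argument, and which ``$K^R_\sigma$ contains enough lattice points'' cannot deliver when that set is empty. Note also that $H^2(Q^\kbb)=0$ is stronger than what the theorem requires: from the long exact sequence one only needs injectivity of the connecting map $H^2(Q^\kbb)\to H^3\big((\spa_kK^R_\kbb)^*\big)$, so if the stronger vanishing fails for some degrees $R$ your route collapses even though the theorem may still hold.

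For comparison, the paper does not prove this statement internally at all: it is quoted from Altmann--Sletsj\o e \cite[Proposition 5.4]{klaus}. So what you are attempting is a reproof of that external result, which is legitimate in principle, but as written your proposal consists of a correct but easy reduction plus an unsubstantiated conjecture where the cited proposition does its work; to complete it you would essentially have to redo the additive-versus-linear analysis of the sets $K^R_\tau$ for Gorenstein cones that constitutes the content of \cite{klaus}.
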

\begin{proof}
See \cite[Proposition 5.4]{klaus}.
\end{proof}

Note that for every $j=1,...,N$ it holds that 
$$
\sp_kK_{a_j}^R=
\left\{
\begin{array}{ll}
0&\text{ if }\lan a_j,R \ran \leq 0 \\
(a_j)^{\perp} &\text{ if }\lan a_j,R \ran=1\\
M\otimes_{\ZZ}k& \text{ if }\lan a_j,R \ran \geq 2.
\end{array}
\right.
$$

In the following we compute $T^2(-R_m)$ for $R_m=mR^*$ with $m\geq 2$ using Theorem \ref{th end}. For all $j=1,...,N$ it holds that $\spa_kK^{R_m}_{a_j}=M\otimes_Zk$ and $$\spa_k(K^{R_m}_{a_j}\cap K^{R_m}_{a_{j+1}})=(\delta_jd_j)^\perp,$$
where 
$$\delta_j:=\left\{
\begin{array}{ll}
0&\text{ if } \ell(d_j)<m\\
1&\text{ if } \ell(d_j)\geq m.
\end{array}
\right.
$$

Thus the complex $(\spa_kK^{R_m}_\kbb)^*$ for $R_m=mR^*$ with $m\geq 2$ becomes  
$$0\to N_{k}\xrightarrow{\psi} N^N_{k} \xrightarrow{\delta}\oplus_j(N_k/\delta_jd_j)\xrightarrow{\eta} (\sp_k(\cap_jK^{R_m}_{a_j}))^*,$$

where $\psi(x)=(x,...,x)$, $\delta(b_1,...,b_N)=(b_1-b_2,b_2-b_3,...,b_N-b_1)$,
$\eta(q_1,...,q_N)=\sum_{j=1}^Nq_j$.

\begin{proposition}\label{prop descr t2}
It holds that 
$T^2(-R_m)\cong\ker{\eta}/\text{im}~\delta$. 
Moreover, if $m=2$ and $X_{\sigma}$ is isolated, then $\ker{\eta}/\text{im}~\delta\cong (M_{k}/R^*)^*$ holds.
\end{proposition}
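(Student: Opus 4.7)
The plan is to treat the two assertions separately. The first is immediate from Theorem \ref{th end}: the four-term sequence displayed just before the proposition is exactly the complex $(\sp_k K^{R_m}_\kbb)^*$, and by inspection its second cohomology equals $\ker \eta / \im \delta$, which coincides with $T^{2,-R_m}(A)$.

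For the moreover clause, I would first reduce the complex using isolatedness. Since $X_\sigma$ is a three-dimensional isolated Gorenstein singularity, every two-face $\lan a_j, a_{j+1}\ran \le \sigma$ is smooth; combined with the Gorenstein relations $\lan a_j, R^*\ran = \lan a_{j+1}, R^*\ran = 1$ this forces $d_j = a_{j+1} - a_j$ to be primitive in $N$, i.e.\ $\ell(d_j) = 1$. Hence with $m = 2$ every $\delta_j$ vanishes, and the complex collapses to
\[
0 \to N_k \xrightarrow{\psi} N_k^N \xrightarrow{\delta} N_k^N \xrightarrow{\eta} (\sp_k(\cap_j K^{2R^*}_{a_j}))^*,
\]
with $\delta(b_1, \ldots, b_N) = (b_1 - b_2, \ldots, b_N - b_1)$ a cyclic difference.

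Next I would run a direct cohomology computation. The components of $\delta(b)$ always sum to $0$, and conversely any zero-sum tuple lies in $\im \delta$ by a triangular substitution, so $\im \delta = \ker(\Sigma)$ where $\Sigma : N_k^N \to N_k$ is componentwise summation. Thus $\Sigma$ descends to an isomorphism $N_k^N / \im \delta \xrightarrow{\sim} N_k$, and under this $\ker \eta / \im \delta$ maps isomorphically onto $\Sigma(\ker \eta)$. Because $\eta$ itself factors as $\Sigma$ followed by the surjection $N_k = M_k^* \twoheadrightarrow (\sp_k(\cap_j K^{2R^*}_{a_j}))^*$ whose kernel is the annihilator $(\sp_k(\cap_j K^{2R^*}_{a_j}))^\perp$, and since $\Sigma$ is surjective, this yields
\[
\ker \eta / \im \delta \;\cong\; (\sp_k(\cap_j K^{2R^*}_{a_j}))^\perp \subset N_k.
\]

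The hard part will be the convex-geometric identification $\sp_k(\cap_j K^{2R^*}_{a_j}) = k \cdot R^*$. The inclusion $\supseteq$ is automatic since both $0$ and $R^*$ lie in the intersection, so the task is to show that every $r \in \Lambda$ with $\lan a_j, r\ran \in \{0, 1\}$ for all $j$ equals $0$ or $R^*$. My approach would be: for such an $r \notin \{0, R^*\}$ the cyclic order of $a_1, \ldots, a_N$ produces a consecutive pair $(a_j, a_{j+1})$ with $\lan a_j, r\ran = 0$ and $\lan a_{j+1}, r\ran = 1$, and then the primitivity of $d_j$ (i.e.\ the smoothness of the two-face $\lan a_j, a_{j+1}\ran$) pins $r$ down on the rank-two sublattice spanned by $\{a_j, d_j\}$; the positivity constraints $\lan a_k, r\ran \ge 0$ at an appropriately chosen non-adjacent generator $a_k$ then give a contradiction. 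Once the span identity is established, standard duality $(k R^*)^\perp \cong (M_k / k R^*)^* = (M_k / R^*)^*$ closes the argument.
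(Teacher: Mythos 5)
Your first paragraph and your homological reduction are correct, and they coincide with what the paper actually does: the paper's one-line justification of the second claim (``exactness of $N^N_k\xrightarrow{\delta}N^N_k\xrightarrow{\eta}N_k$'') is precisely your identity $\im\,\delta=\ker\Sigma$ for the componentwise summation $\Sigma:N_k^N\to N_k$, which turns $\ker\eta/\im\,\delta$ into the annihilator $W^\perp\subset N_k$ of $W:=\sp_k\bigl(\cap_jK^{2R^*}_{a_j}\bigr)$. Also your reduction $\ell(d_j)=1$, $\delta_j=0$ from isolatedness is fine.

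The genuine gap is the step you yourself call the hard part, the identification $W=k\cdot R^*$: you only sketch it, and the sketch cannot be completed, because the identity fails in this generality. Take the cone over the unit square, $a_1=(0,0,1)$, $a_2=(1,0,1)$, $a_3=(1,1,1)$, $a_4=(0,1,1)$, i.e.\ the three-dimensional ordinary double point $xy=zw$, which is an isolated toric Gorenstein singularity. Then $r=(1,0,0)$ satisfies $\lan a_j,r\ran\in\{0,1\}$ for all $j$, and so do $(0,1,0)$, $(-1,0,1)$, $(0,-1,1)$; hence $\cap_jK^{2R^*}_{a_j}$ spans all of $M_k$, so $W^\perp=0$ and $\ker\eta/\im\,\delta=0$ (consistent with $T^2=0$ for this hypersurface), while $(M_k/R^*)^*$ is two-dimensional. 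The phenomenon is general: whenever the polygon $P$ has lattice width one in some direction, the corresponding functionals of value $0$ or $1$ on all the $a_j$ enter $\cap_jK^{2R^*}_{a_j}$, and no contradiction with positivity at a non-adjacent generator arises; so your plan of pinning $r$ down via an adjacent pair with values $0,1$ and primitivity of $d_j$ cannot succeed. To make the argument work you must either add the hypothesis $\cap_jK^{2R^*}_{a_j}=\{0,R^*\}$ (equivalently, $P$ has lattice width at least two in every direction, as e.g.\ for the hexagon) or replace the target by $W^\perp$. Note that the paper's own proof is silent on exactly this point and tacitly uses the same identification, so you have correctly isolated where the real content -- and the real problem -- of the ``moreover'' clause lies.
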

\begin{proof}
The first statement follows from the above calculation and Theorem \ref{th end}. The second statement follows from 
the exactness of the complex
$$N^N_{k}\xrightarrow{\delta}N^N_{k}\xrightarrow{\eta}N_{k}.$$
\end{proof}

\subsection{The cup product $T^1(-R^*)\times T^1(-R^*)\to T^2(-2R^*)$}

In the case of isolated three-dimensional toric Gorenstein singularities Altmann \cite{alt2} obtained the following cup product
\begin{equation}\label{eq alt cup}
V/(k\cdot \un{1})\times V/(k\cdot \un{1})\mapsto (M_{k}/R^*)^*
\end{equation}
$$(\underline{t},\underline{s})\mapsto \sum_{j=1}^Ns_jt_jd_j.$$

Recall that it holds $T^1(-R^*)\cong H^2_{(1)}(-R^*)$ and $T^2(-2R^*)\cong H^3_{(1)}(-2R^*)$.
We will now generalize Altmann's cup product formula to the case of not necessarily isolated toric Gorenstein singularities. Note that Altmann was using different methods (Laudal's  cup product) in his proof.

We will first recall the isomorphism map $V/(k\cdot \underline{1})\xrightarrow{f} H^1\big((\spa_kK^{R^*}_\kbb)^* \big)$ from \cite[Section 2.7]{alt}. Note that both spaces are isomorphic to $T^{1,-R^*}(k[\Lambda])$ by Theorem \ref{th end} and Proposition \ref{prop descr t1}. 
By the definition it holds that $(\spa_kK^{R^*}_1)^*=\oplus_j(a^\perp_j)^*$.
We will define $u_j\in (a^\perp_j)^*$ such that $f(\underline{t})=\overline{(u_1,...,u_N)}$ . 
There exist $b_j\in R^{\perp}$ for $j=1,...,N$ such that $\forall j$ it holds that 
\begin{equation}\label{eq proj}
b_{j+1}-b_{j}=t_j(a_{j+1}-a_j).
\end{equation}
Since $\sum_{j=1}^Nt_jd_j=0$ we have a one-parameter solution of this system of equations, namely $b_2=b_1+t_1d_1$, $b_3=b_1+t_1d_1+t_2d_2$,..., $b_N=b_1+\sum_{i=1}^{N-1}t_id_i$. Our function $u_j\in (a^{\perp}_j)^*$ is defined by $u_j(x)=\lan b_j,x\ran$. Note that for different choices of 
$b_1$ we still obtain the same element in $H^1\big((\spa_kK^{R^*}_\kbb)^* \big)$ and thus $f$ is well defined.
Note also that we indeed have $u_j-u_{j+1}=0$ on $a^{\perp}_j\cap a^{\perp}_{j+1}$.

\begin{theorem}\label{prop cup iso}
The cup product $T^1(-R^*)\times T^1(-R^*)\to T^2(-2R^*)$ equals the bilinear map 
$$V/(k\cdot \un{1})\times V/(k\cdot \un{1})\mapsto \ker{\eta}/\im{\delta}$$
$$(\underline{t},\underline{s})\mapsto (s_1t_1d_1,...,s_Nt_Nd_N).$$
\end{theorem}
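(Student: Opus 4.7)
The plan is to compute the Harrison cup product $\overline{[d\xi^0,d\mu^0]}$ explicitly through Construction \ref{cons 1}, exploiting the fact that in the Gorenstein setting the classes in $T^1(-R^*)\cong V/k\cdot\un{1}$ admit \emph{globally linear} representatives. Given $\un{t},\un{s}\in V$, I would first choose $b_j,c_j\in R^{*\perp}\subset N_k$ with $b_{j+1}-b_j=t_jd_j$ and $c_{j+1}-c_j=s_jd_j$ as in the description of $f$ preceding the theorem, and extend $u_j,v_j$ to linear functionals $\tilde{u}_j:=\langle b_j,\cdot\rangle$, $\tilde{v}_j:=\langle c_j,\cdot\rangle$ on all of $M_k$. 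Defining $\xi,\mu\in H^1_{(1)}(\Lambda\setminus\Lambda(R^*);k)$ by $\xi|_{K^{R^*}_{a_j}}:=\tilde{u}_j|_{K^{R^*}_{a_j}}$ and analogously for $\mu$ gives well-defined cocycles (compatibility on overlaps follows from $\langle d_j,\cdot\rangle=0$ on $a_j^\perp\cap a_{j+1}^\perp$) whose classes $\overline{d\xi^0},\overline{d\mu^0}$ realize $f(\un{t}),f(\un{s})$ under Corollary \ref{prop d}.

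The key simplification is that $b_j\in R^{*\perp}$ forces $\tilde{u}_j(\lam-R^*)=\tilde{u}_j(\lam)$, so formula \eqref{eq hj} collapses to $h_j(\lam)=\epsilon(\lam)\,\tilde{u}_j(\lam)\,\tilde{v}_j(\lam)$ on $K^{2R^*}_{a_j}$, where $\epsilon(\lam)=+1$ if $\lam\in\Lambda(R^*)$ and $-1$ otherwise. Proposition \ref{prop cup product} then yields $dh_j=C^0_j$, and by Remark \ref{rem th} the discrepancy $C_j-dh_j$ is supported on the thin slices where $\langle a_j,\lam\rangle=1$ and $\langle a_k,\lam\rangle=0$ for some $k\ne j$ (and the analogous $\mu$-side slice). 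Proposition \ref{smooth} then produces $F_j$ with $dF_j=C_j-dh_j$; setting $G_j:=h_j+F_j$ we obtain $dG_j=C_j$, as required by Construction \ref{cons 1}.

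Finally, I would compute $(G_{j+1}-G_j)|_{K^{2R^*}_{a_j}\cap K^{2R^*}_{a_{j+1}}}$, which is automatically a cocycle (since $C_{j+1}=C_j$ on the overlap) and thus represents an element of $H^1_{(1)}(K^{2R^*}_{a_j}\cap K^{2R^*}_{a_{j+1}};k)\cong N_k/\delta_jd_j$ via Theorem \ref{th end}. Using the identity
$$\tilde{u}_{j+1}\tilde{v}_{j+1}-\tilde{u}_j\tilde{v}_j=t_j\langle d_j,\cdot\rangle\tilde{v}_j+s_j\tilde{u}_j\langle d_j,\cdot\rangle+s_jt_j\langle d_j,\cdot\rangle^2,$$
every non-linear piece of $h_{j+1}-h_j$ carries $\langle d_j,\cdot\rangle$ as a factor. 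When $\delta_j=1$ (i.e.\ $\ell(d_j)\geq 2$), $\langle d_j,\cdot\rangle\equiv 0$ on the span $d_j^\perp$ of the intersection, so the entire contribution vanishes, matching $s_jt_jd_j\equiv 0\pmod{d_j}$. When $\delta_j=0$, the freedom in $F_j$ modulo additive functions lets one absorb the cross and pure-quadratic terms into coboundaries supported near the bad set, leaving the additive representative $s_jt_j\langle d_j,\cdot\rangle$, which corresponds under $(\spa_k(K^{2R^*}_{a_j}\cap K^{2R^*}_{a_{j+1}}))^*\cong N_k$ to $s_jt_jd_j$. Assembling across $j$ gives $\overline{\delta G}=(s_1t_1d_1,\ldots,s_Nt_Nd_N)\in\ker\eta/\im\delta=T^2(-2R^*)$.

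The principal technical obstacle is the last step: verifying that $F_{j+1}-F_j$ on the edge indeed kills the quadratic and cross pieces of $h_{j+1}-h_j$ without altering the linear contribution. Concretely one must show that $s_jt_j\langle d_j,\cdot\rangle^2$ and $s_j\tilde{u}_j\langle d_j,\cdot\rangle+t_j\langle d_j,\cdot\rangle\tilde{v}_j$ are cohomologous to zero in $H^1_{(1)}$ of the intersection; existence of the bounding 0-cochains follows from Proposition \ref{smooth}, but isolating the precise additive representative $s_jt_j\langle d_j,\cdot\rangle$ requires a careful bookkeeping of the geometry of the "bad" codimension-one slices and the freedom in the choice of $F_j$.
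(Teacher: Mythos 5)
Your setup is the same as the paper's: you take the linear representatives $\tilde u_j=\langle b_j,\cdot\rangle$, $\tilde v_j=\langle c_j,\cdot\rangle$, use \eqref{eq hj} (your observation that $h_j$ collapses to $\pm\tilde u_j\tilde v_j$ because $b_j,c_j\in R^{*\perp}$ is correct), invoke Proposition \ref{prop cup product}, Remark \ref{rem th} and Proposition \ref{smooth} to get $G_j=h_j+F_j$ with $dG_j=C_j$, and then aim to identify $\delta G$ on each $2$-face. The problem is that the decisive step --- showing that the additive function $(G_j-G_{j+1})|_{K^{2R^*}_{a_j}\cap K^{2R^*}_{a_{j+1}}}$ is exactly $s_jt_j\langle d_j,\cdot\rangle$ --- is precisely what you defer as ``the principal technical obstacle,'' and the heuristic you offer does not establish it. First, your mechanism is not available: $H^1_{(1)}$ of the overlap is the space of additive functions itself (the complex $C^{\kbb}_{(1)}$ has no degree-zero term), so there are no ``coboundaries supported near the bad set'' to absorb the cross and quadratic pieces face by face; the only freedom is to change $F_j$ by a linear functional on all of $K^{2R^*}_{a_j}$, which moves $\delta G$ only inside $\im\delta$ globally and cannot be used per edge. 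Second, the heuristic points the wrong way: if you simply discarded every term carrying a factor $\langle d_j,\cdot\rangle$ you would get $0$, not $s_jt_j\langle d_j,\cdot\rangle$. On the relevant slices one has $\langle d_j,\cdot\rangle=\mp 1$ (on $P^j_1$ and $P^{j+1}_2$ respectively), so the ``pure-quadratic'' term $s_jt_j\langle d_j,\cdot\rangle^2$ restricts to the constant $s_jt_j$, and the linear answer emerges only from the pointwise interplay between $h_{j+1}-h_j$ and the values of $F_{j+1}-F_j$ on those slices --- values you cannot access because your $F_j$ exists only by the abstract vanishing of $H^2_{(1)}(K^{2R^*}_{a_j};k)$.

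The paper closes exactly this gap by writing $F_j$ down explicitly: $F_j(c)=\xi(c)s_j+\mu(c)t_j$ on $P^j_1$, $F_j(c)=-\xi(c)s_{j-1}-\mu(c)t_{j-1}$ on $P^j_2$, and $0$ elsewhere, verifying $dh_j+dF_j=C_j$, and then evaluating $(F_j-F_{j+1})+(h_j-h_{j+1})$ pointwise on the three pieces $P^j_1$, $P^{j+1}_2$ and $kR^*+\Lambda\cap a_j^{\perp}\cap a_{j+1}^{\perp}$ of the overlap, obtaining the values $-s_jt_j$, $+s_jt_j$ and $0$, which (since $\langle d_j,\cdot\rangle$ takes the values $-1,+1,0$ there) identifies the class as $s_jt_jd_j$; the case $\ell(d_j)>1$ is handled as you indicate. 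Without an explicit $F_j$ (or an equivalent device pinning down its restriction to the slices), the coefficient $s_jt_j$ --- as opposed to $0$ or $2s_jt_j$ --- is not proved, so the argument as written has a genuine gap at the heart of the theorem.
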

\begin{proof}
We write for short $R=R^*$. Let 
$\xi=(0,t_1d_1,...,\sum_{j=1}^{n-1}t_jd_j) \in \oplus_jH^1_{(1)}(K^R_{a_j};k)$ and $\mu=(0,s_1d_1,...,\sum_{j=1}^{n-1}s_jd_j)\in \oplus_jH^1_{(1)}(K^R_{a_j};k)$. By the above description of the isomorphism map $f$ (using $b_1=0$) and by Construction \ref{cons 1} it is enough to prove $$\delta G=(t_1s_1d_1,...,t_Ns_Nd_N)\in \bigoplus_{\tau\leq \sigma, \dim\tau=2}H^1_{(1)}(K^{R+S}_\tau;k).$$
Here $G$ is constructed with $\xi$ and $\mu$ as in Construction \ref{cons 1}.

%We need to describe how to obtain the functions $h_j$ (see the equation \eqref{eq hj}). 
By the description of $\xi$ and $\mu$ we define 
$\xi_j,\mu_j\in C^1_{(1)}(M;k)$ as $\xi_1=\mu_1=0$ and 
$$\xi_j(\lam):=
\left\{
\begin{array}{ll}
\lan \sum_{k=1}^{j-1}t_kd_k,\lam \ran&\text{ if }\lam\in K_{a_j}^{2R}\\
0&\text{ otherwise }
\end{array}
\right.
,~~~~
\mu_j(\lam):=
\left\{
\begin{array}{ll}
\lan \sum_{k=1}^{j-1}s_kd_k,\lam \ran&\text{ if }\lam\in K_{a_j}^{2R}\\
0&\text{ otherwise }
\end{array}
\right.
$$
for $2\leq j\leq N$.
Recall from the equation \eqref{eq hj} that the functions $h_j\in C^1_{(1)}(K^{2R}_{a_j};k)$ are
$$h_j(\lam)=-\xi_j(\lam)\mu_j(\lam)+\xi_j(-R+\lam)\mu_j(\lam)+\mu_j(-R+\lam)\xi_j(\lam).$$

Recall that $dh_j=C^0_j$ by Proposition \ref{prop cup product}.
From Remark \ref{rem th} we know that in order to determine when $(dh_j-C_j)(\lam_1,\lam_2)\in C^2_{(1)}(K^{2R}_{a_j};k)$ is zero, we need to consider the space  $P^j:=(K^{2R}_{a_j}\setminus K^R_{a_j})\cap(\cup_{k;k\ne j} K^R_{a_k})$. Denoting
$$P^j_1:=(K^{2R}_{a_j}\setminus K^R_{a_j})\cap K^{R}_{a_{j+1}},$$
$$P^j_2:=(K^{2R}_{a_j}\setminus K^R_{a_j})\cap K^{R}_{a_{j-1}},$$
we see that 
$P^j=P^j_1\cup P^{j}_2$ for each $j$.

 If $\ell(d_j)>1$ (or equivalently $\lan a_j,a_{j+1} \ran$ is not smooth), then $P^j_1=P^j_2=\emptyset$ and thus  
$dh_j=C_j\in C^2_{(1)}(K_{a_j}^{2R};k)$ for all $j=1,...,N$ by Proposition \ref{prop cup product} and Remark \ref{rem th}.

If $\ell(d_j)=1$ (or equivalently if $\lan a_j,a_{j+1} \ran$ is smooth), then $P^j_1\subset \Lambda$ and $P^j_2\subset \Lambda$ are infinite sets contained in the lines parallel to $a_j^{\perp}\cap a_{j+1}^{\perp}$ and $a_{j-1}^{\perp}\cap a_{j}^{\perp}$, respectively.
If $\lam \in P_1^j\cup P_2^j$, then $\lan \lam,a_j\ran=1$.
We want to find the functions $F_j\in C^1_{(1)}(K_{a_j}^{2R};k)$ for which $dh_j+dF_j=C_j\in C^2_{(1)}(K_{a_j}^{2R};k)$ holds. 
Let 
$$
F_j(c):=\left\{
\begin{array}{ll}
-\xi(c)s_jd_j(c)-\mu(c)t_jd_j(c)=\xi(c)s_j+\mu(c)t_j&\text{ if }c\in P^j_1\\
\xi(c)s_{j-1}d_{j-1}(c)+\mu(c)t_{j-1}d_{j-1}(c)=-\xi(c)s_{j-1}-\mu(c)t_{j-1}&\text{ if }c\in P^j_2\\
0&\text{ otherwise}.
\end{array}
\right.
$$
Using that $dh_j=C^0_j$ (see Proposition \ref{prop cup product}) it is straightforward to verify that $dh_j+dF_j=C_j$ indeed holds. Let us just verify it for 
$(\lam^j,\lam_1^j)\in (\Lambda\cap a_j^\perp\cap a_{j+1}^\perp,P^j_1)$. In this case we have $$dF_j(\lam^j,\lam_1^j)=-\xi(\lam^j)s_j-\mu(\lam^j)t_j,$$ $$(C_j-dh_j)(\lam^1,\lam_1^j)=\xi^0(\lam^j)\mu^0(\lam_1^j)+\mu^0(\lam^j)\xi^0(\lam_1^j)-\xi_j^0(\lam^j)\mu_j^0(\lam_1^j)-\mu_j^0(\lam^j)\xi_j^0(\lam_1^j).$$ Since $\xi^0(\lam^j)=\xi_j^0(\lam^j)$, $\mu^0(\lam^j)=\mu_j^0(\lam^j)$ and $(\xi^0-\xi^0_j)(\lam_1^j)=t_jd_j(\lam_1^j)=-t_j, (\mu^0-\mu^0_j)(\lam_1^j)=s_jd_j(\lam_1^j)=-s_j$, we conclude the claim.

To conclude the proof of Theorem \ref{prop cup iso}, we need to show that 
\begin{equation}\label{eq zadnazad}
\delta G=\delta(F_1,...,F_N)+\delta(h_1,...,h_N)=(t_1s_1d_1,...,t_Ns_Nd_N)\in H^1_{(1)}(K^{2R}_{j}\cap K^{2R}_{j+1};k).
\end{equation}

Since $K^{2R}_{j}\cap K^{2R}_{j+1}=P_1^j\cup P_2^{j+1}\cup \big(kR+\Lambda\cap a_j^{\perp}\cap a_{j+1}^{\perp}\big)$ for $k\in \{0,1\}$, we need to distinguish the following cases.
\begin{enumerate}
\item $c\in P_1^j$: it holds that $\lan a_j,c\ran=1$, $\lan a_{j+1},c\ran=0$ and thus we have $F_{j+1}(c)=0$, $F_j(c)=\xi(c)s_j+\mu(c)t_j$. In this case we have $\xi_{j+1}(c)=\xi(c)$. Using $\xi_{j+1}(c)=\xi_j(c)+t_jd_j(c)$,
$
(h_{j}-h_{j+1})(c)=-\xi_j(c)\mu_j(c)+\xi_{j+1}(c)\mu_{j+1}(c)$ and the fact that 
$d_j(c)=-1$ we obtain that
$$(F_{j}-F_{j+1})(c)+(h_{j}-h_{j+1})(c)=-s_jt_j.$$  
\item $c\in P_2^{j+1}$: it holds that $\lan a_{j},c\ran=0$,  $\lan a_{j+1},c\ran=1$ and thus we have $F_j(c)=0$, $F_{j+1}(c)=-\xi(c)s_j-\mu(c)t_j$. In this case we have $\xi_j(c)=\xi(c)$ and similarly as in the case (1) we obtain that
$$(F_{j}-F_{j+1})(c)+(h_{j}-h_{j+1})(c)=t_js_j.$$
\item $c\in kR+\Lambda\cap a_j^{\perp}\cap a_{j+1}^{\perp}$: it holds that $(F_{j}-F_{j+1})(c)+(h_{j}-h_{j+1})(c)=0$.

\end{enumerate}
Since $d_j(c)=-1$ for $c\in P_1^j$ and $d_j(c)=1$ for $c\in P_2^{j+1}$ we see that the equation \eqref{eq zadnazad} indeed holds.  Note that  if $\ell(d_j)>1$, then we already mentioned that $P^j_1=P^j_2=\emptyset$ for all $j$, which implies $F_j=F_{j+1}=0$ and as in the case (3) above we have $\delta\underline{h}=0$ which agrees with our cup product formula since $t_js_jd_j=0$ on $N_k/d_j$. Thus we conclude the proof.
\end{proof}
 \begin{corollary}\label{cor altmann}
If $X_{\sigma}$ is an isolated Gorenstein singularity, then Theorem \ref{prop cup iso} gives us Altmann's cup product \eqref{eq alt cup}. 
 \end{corollary}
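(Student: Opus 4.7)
The plan is to make the abstract isomorphism $\ker\eta/\im\delta\cong (M_k/R^*)^*$ of Proposition \ref{prop descr t2} concrete by exhibiting it as the natural total-sum map
$$\Sigma\colon \ker\eta/\im\delta\longrightarrow (M_k/R^*)^*,\qquad (q_1,\dots,q_N)\longmapsto \sum_{j=1}^Nq_j,$$
and then to simply evaluate $\Sigma$ on the cup-product class produced by Theorem \ref{prop cup iso}. Once $\Sigma$ is verified to be the promised isomorphism, the entire corollary should follow by inspection.

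First I would check that $\Sigma$ actually lands in $(M_k/R^*)^*$. The Gorenstein relation $\lan a_j,R^*\ran=1$ forces $\lan d_j,R^*\ran=\lan a_{j+1}-a_j,R^*\ran=0$, so every $d_j$ lies in $(R^*)^{\perp}=(M_k/R^*)^*\subset N_k$, and hence so does $\sum_js_jt_jd_j$. In particular the tuple $(s_jt_jd_j)_j$ belongs to $\ker\eta$, consistently with Theorem \ref{prop cup iso}. Well-definedness on the quotient is immediate: $\Sigma$ vanishes on $\im\delta$ by the cyclic telescoping $\sum_j(b_j-b_{j+1})=0$. For injectivity on $\ker\eta/\im\delta$, if $\sum_jq_j=0$ then the partial sums $b_j:=-\sum_{i<j}q_i$ satisfy $\delta(b_1,\dots,b_N)=(q_1,\dots,q_N)$, placing $(q_j)$ in $\im\delta$. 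Surjectivity is immediate by placing a prescribed functional $n\in(R^*)^{\perp}$ into a single coordinate. A dimension comparison (or equivalently a direct check against the Koszul-type argument invoked in the proof of Proposition \ref{prop descr t2}) identifies $\Sigma$ with the isomorphism of Proposition \ref{prop descr t2}.

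Plugging the output $(s_1t_1d_1,\dots,s_Nt_Nd_N)$ of Theorem \ref{prop cup iso} into $\Sigma$ now yields $\sum_{j=1}^Ns_jt_jd_j\in (M_k/R^*)^*$, which is exactly the right-hand side of Altmann's formula \eqref{eq alt cup}. I expect the only substantive step to be the identification of $\Sigma$ with the isomorphism of Proposition \ref{prop descr t2}; once that is settled, the comparison of the two cup-product formulas is a one-line substitution, and no further calculation is needed.
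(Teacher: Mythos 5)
Your proposal is correct and follows essentially the same route as the paper: the paper's proof likewise consists of observing that the isomorphism $\ker\eta/\operatorname{im}\delta\cong (M_k/R^*)^*$ of Proposition \ref{prop descr t2} is realized by summation of the components, so that the class $(s_1t_1d_1,\dots,s_Nt_Nd_N)$ from Theorem \ref{prop cup iso} is sent to $\sum_j s_jt_jd_j$, recovering \eqref{eq alt cup}. You merely spell out in more detail (well-definedness, injectivity, surjectivity of $\Sigma$) what the paper asserts in one line.
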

\begin{proof}
Follows from Proposition \ref{prop descr t2} since the isomorphism $\ker \mu/\im \delta\cong (M_k/R^*)$ is explicitly given by the summation of components in $\ker\mu\in N^N_k$.
\end{proof}

We will denote the cup product from Theorem \ref{prop cup iso} by $\un{t}\cup \un{s}\in \ker \mu/\im \delta$.

\begin{corollary}\label{cor quad}
\begin{enumerate}
\item  If all edges of $P$ have lattice length 1 (i.e. $X_\sigma$ is an isolated singularity), then we have $\un{t}\cup \un{s}=0$ if and only if $d_1t_1s_1+\cdots+ d_Nt_Ns_N=0$.
\item $P$ has one edge $d_j$ with $\ell(d_j)>1$ and for all other edges $d_k$ holds either $\ell(d_k)=1$ for all $k\ne j$ or there exists $d_i$ parallel to $d_j$ with $\ell(d_i)>1$. In this case we have $\un{t}\cup \un{s}=0$ if and only if $d_1t_1s_1+\cdots+ d_Nt_Ns_N=0$ on $a_j^\perp\cap a^\perp_{j+1}=a_i^\perp\cap a^\perp_{i+1}$.  

\item $P$ has at least two non parallel edges that have lattice length $\geq 2$. In this case $\un{t}\cup \un{s}=0$ always holds.
\end{enumerate}
\end{corollary}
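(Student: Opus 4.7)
The plan is to combine Theorem \ref{prop cup iso} with a direct analysis of the differential $\delta$ in the complex computing $\ker\eta/\im\delta\cong T^{2,-2R^*}(A)$ from Proposition \ref{prop descr t2}. By Theorem \ref{prop cup iso}, the class $\un t\cup\un s$ is represented by $(s_1t_1d_1,\ldots,s_Nt_Nd_N)\in \bigoplus_j(N_k/\delta_jd_j)$, where, with $R_m=2R^*$, one has $\delta_k=1$ iff $\ell(d_k)\geq 2$ and $\delta_k=0$ otherwise. I would first establish an explicit description of $\im\delta$: a cyclic telescoping using $\delta(b_1,\ldots,b_N)=(b_1-b_2,\ldots,b_N-b_1)$ shows that $(v_1,\ldots,v_N)$ lies in $\im\delta$ if and only if, after choosing any lifts $\tilde v_k\in N_k$ of the entries $v_k$, the sum $\sum_{k=1}^N\tilde v_k$ lies in $\sp_k\{d_k~|~\ell(d_k)\geq 2\}$. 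Applied to our representative, the vanishing of $\un t\cup\un s$ thus reduces to the single condition
$$\sum_{k=1}^N s_kt_kd_k\in \sp_k\{d_k~|~\ell(d_k)\geq 2\}. \qquad (\ast)$$

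The Gorenstein hypothesis now supplies a free automatic constraint: each $a_k$ satisfies $\lan a_k,R^*\ran=1$, so every $d_k=a_{k+1}-a_k$, and in particular $v:=\sum s_kt_kd_k$, lies in the two-dimensional subspace $(R^*)^\perp\subset N_k$. A short check gives the key identity $\sp_k\{a_j,a_{j+1}\}\cap(R^*)^\perp=k\cdot d_j$ for every $j$, and hence within $(R^*)^\perp$ the three conditions ``$v\in k\cdot d_j$'', ``$v\in \sp_k\{a_j,a_{j+1}\}$'', and ``$v$ vanishes on $a_j^\perp\cap a_{j+1}^\perp$'' are mutually equivalent. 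This dictionary is what translates the algebraic condition $(\ast)$ into the annihilator formulation appearing in the corollary; it also explains the asserted coincidence of vanishing loci $a_j^\perp\cap a_{j+1}^\perp=a_i^\perp\cap a_{i+1}^\perp$ in (2), since parallel edges $d_j\parallel d_i$ give $k\cdot d_j=k\cdot d_i$ and hence the same $1$-dimensional constraint on $(R^*)^\perp$.

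From here the three cases are a quick linear-algebra verification. In (1) the span in $(\ast)$ is $\{0\}$, so $\un t\cup\un s=0$ iff $\sum s_kt_kd_k=0$. In (2) the span equals $k\cdot d_j$ (and equals $k\cdot d_i$ in sub-case (2b) by parallelism), and by the dictionary above this is equivalent to $\sum s_kt_kd_k$ vanishing on $a_j^\perp\cap a_{j+1}^\perp$, giving the stated equivalence. In (3) the two non-parallel edges $d_j,d_i\in (R^*)^\perp$ span a two-dimensional subspace, which therefore coincides with the whole $(R^*)^\perp$, so $(\ast)$ holds automatically and $\un t\cup\un s$ vanishes unconditionally.

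The main technical obstacle is pinning down $\im\delta$ when the $\delta_k$ take mixed values $0$ and $1$; once the cyclic-lifting characterization is carried out cleanly, the remainder is just the Gorenstein identity $\lan d_k,R^*\ran=0$ combined with dimension counting inside the two-dimensional space $(R^*)^\perp$.
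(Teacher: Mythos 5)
Your argument is correct and is essentially the paper's: the paper's own proof likewise reduces $\un{t}\cup\un{s}=0$ to the existence of scalars $f_j$, supported on the edges with $\ell(d_j)>1$, satisfying $\sum_{j:\ell(d_j)=1}t_js_jd_j+\sum_{j:\ell(d_j)>1}f_jd_j=0$, which is exactly your condition $(\ast)$, and then treats the three cases as immediate. Your telescoping description of $\im\delta$ and the dictionary inside $(R^*)^\perp$ (relating membership in $k\cdot d_j$ to vanishing on $a_j^\perp\cap a_{j+1}^\perp$) just supply the details the paper compresses into ``from this the proof easily follows.''
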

\begin{proof}
From the definition of the map $\delta$ we see that $\un{t}\cup \un{s}=0$ if and only if for all $j$ such that $\ell(d_j)>1$ there exist functions $f_j(\un{t},\un{s})$ such that 
$$\big(\sum_{j; \ell(d_j)=1}t_js_jd_j\big)+\big( \sum_{j; \ell(d_j)>1}f_j(\un{t},\un{s})d_j\big)=0.$$
From this the proof easily follows.
\end{proof}

\begin{remark}
By standard deformation theory arguments (see e.g. \cite[pp. 64]{ste}) we know that the quadratic equations corresponding to $\un{t}\cup \un{t}=0$ give us the quadratic equations of the versal base space in degree $-R^*$.
\end{remark}

\subsection{The cup product between nonnegative degrees}

Let $X_{\sigma}$ be a non isolated three-dimensional toric Gorenstein singularity. 
In this section we compute the cup product $T^1(-R)\times T^1(-S)\to T^2(-R-S)$ for $R,S\not \geq 0$, i.e. for $R,S\not\in \Lambda$. If $R$ and $S$ have the last component equal to $0$, then the computations in this section have implications in deformation theory of projective toric varieties.

Let $s_1,...,s_N$ be the fundamental generators of the dual cone $\sigma^{\vee}$, labelled so that $\sigma \cap (s_j)^{\perp}$ equals the face spanned by $a_j,a_{j+1}\in \sigma$.

Let $R^{p,q}_j:=qR^*-ps_j$ with $2\leq q \leq \ell(d_j)$ and $p\in \ZZ$ sufficiently large such that $R^{p,q}_j\not \in \text{int}(\sigma^{\vee})$. In this case we already know that $T^1(-R_j^{p,q})$ is one dimensional by \cite[Theorem 4.4]{alt}. 

\begin{lemma}\label{lem more 2}
If $\#\{a_j~|~\lan a_j,R\ran> 0\}\leq 2$ it holds that $T^2(-R)=0$. 
\end{lemma}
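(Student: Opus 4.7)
The plan is to apply Theorem~\ref{th end}, which for Gorenstein $X_\sigma$ identifies
$$T^{2,-R}(A) \cong H^2\bigl((\spa_k K^R_\kbb)^*\bigr).$$
So it suffices to show that this second cohomology vanishes whenever the set $J := \{j \mid \lan a_j, R\ran > 0\}$ has size at most $2$. The key input is the dimension criterion recalled after Theorem~\ref{th end}: $\spa_k K^R_{a_j} = 0$ iff $\lan a_j, R\ran \leq 0$. Since $K^R_\tau \subseteq K^R_{a_j}$ for every ray $a_j \leq \tau$, this forces $\spa_k K^R_\tau = 0$ as soon as at least one ray of $\tau$ lies outside $J$. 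In particular, a $2$-face $\tau = \lan a_j, a_{j+1}\ran$ contributes a nonzero term $(\spa_k K^R_\tau)^*$ only when both $j, j+1 \in J$, and $(\spa_k K^R_\sigma)^* \neq 0$ would require $J = \{1,\ldots,N\}$.

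With this in hand the argument is a short case analysis. If $|J| \leq 1$, no $2$-face contains two rays from $J$, so the entire degree-$2$ term of $(\spa_k K^R_\kbb)^*$ is zero and $H^2$ vanishes for trivial reasons. If $|J| = 2$ with the two rays non-adjacent in the cycle $a_1,\ldots,a_N$ (which requires $N \geq 4$), the same conclusion holds. The only nontrivial case is $|J|=2$ with the rays adjacent, say $J = \{j_0, j_0+1\}$. Then exactly one $2$-face $\tau = \lan a_{j_0}, a_{j_0+1}\ran$ contributes, while the degree-$3$ term $(\spa_k K^R_\sigma)^*$ still vanishes (as $|J| < N$); therefore $\ker(d_2) = (\spa_k K^R_\tau)^*$, and the task reduces to showing that
$$d_1 : (\spa_k K^R_{a_{j_0}})^* \oplus (\spa_k K^R_{a_{j_0+1}})^* \longrightarrow (\spa_k K^R_\tau)^*$$
is surjective.

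The surjectivity of $d_1$ is essentially automatic: $d_1$ is (up to signs) the difference of the two restriction maps, and since $\spa_k K^R_\tau \subseteq \spa_k K^R_{a_{j_0}}$, any linear functional on the subspace $\spa_k K^R_\tau$ already extends to a linear functional on $\spa_k K^R_{a_{j_0}}$. Thus a preimage of any $v \in (\spa_k K^R_\tau)^*$ can be realized by $(\tilde v, 0)$ alone. This completes the proof. The main (and only) mildly delicate point is the adjacent-rays subcase; once one has the vanishing criterion for $\spa_k K^R_\tau$, everything else is a formal consequence of the fact that restriction of linear functionals from an ambient space to a linear subspace is always onto.
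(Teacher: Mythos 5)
Your proof is correct, but it takes a more hands-on route than the paper. The paper disposes of the case $\#\{a_j\mid\lan a_j,R\ran>0\}\leq 1$ as trivial and then, assuming the two relevant rays are the adjacent generators $a_1,a_2$, reduces to the known fact that $T^2=0$ for the two-dimensional Gorenstein (i.e.\ $A_\ell$-hypersurface) singularity spanned by $\lan a_1,a_2\ran$ -- a short conceptual reduction to dimension two, though the localization step and the non-adjacent subcase are left implicit. You instead stay entirely inside the machinery of Theorem \ref{th end}: you use the vanishing criterion $\spa_kK^R_{a_j}=0$ for $\lan a_j,R\ran\leq 0$ together with $K^R_\tau=\cap_{a_j\in\tau}K^R_{a_j}$ to kill all but (at most) one summand in degree $2$ and the whole degree-$3$ term, and then observe that the differential into the surviving summand $(\spa_kK^R_\tau)^*$ is onto because linear functionals extend from the subspace $\spa_kK^R_\tau\subseteq\spa_kK^R_{a_{j_0}}$. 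This buys you a self-contained argument that does not invoke the hypersurface/surface fact, and it treats the non-adjacent two-ray configuration explicitly; the paper's version buys brevity and a cleaner conceptual statement (the degree-$R$ obstruction space only sees the surface singularity along the relevant $2$-face). Both arguments localize the computation at the unique $2$-face whose two rays pair positively with $R$, so the underlying mechanism is the same.
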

\begin{proof}
If $\#\{a_j~|~\lan a_j,R\ran> 0\}\leq 1$, the statement is trivial.
Without loss of generality we assume $\lan a_i,R\ran> 0$ for $i=1,2$ and $\lan a_j,R\ran\leq 0$ for other $j$. Now the statement follows from the fact that $T^2=0$ for a Gorenstein surface $\lan a_1,a_2 \ran\subset N_{\RR}\cong \RR^2$.
\end{proof}

\begin{proposition}
Let $R_1:=R^{p_1,q_1}_j$ and $R_2:=R^{p_2,q_2}_k$, where $j$ and $k$ are chosen such that either $j=k$ or it does not exist a $2$-face $F$ of $\sigma^\vee$ with $s_j,s_k\in F$.
The cup product $T^1(-R_1)\times T^1(-R_2)\to T^2(-R_1-R_2)$ is the zero map.
\end{proposition}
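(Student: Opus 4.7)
The plan is to reduce the proposition to Lemma \ref{lem more 2}, which says that $T^2(-R)=0$ as soon as $\#\{a_l\mid \langle a_l,R\rangle>0\}\le 2$. I would simply show that, in both combinatorial configurations of the pair $(s_{j_1},s_{j_2})$, the set $\{a_l\mid \langle a_l,R_1+R_2\rangle>0\}$ has at most two elements; then $T^2(-R_1-R_2)=0$ and the cup product is automatically zero.

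Using the Gorenstein identity $\langle a_l,R^*\rangle=1$ valid for every ray of $\sigma$, one expands
\begin{equation*}
\langle a_l,R_1+R_2\rangle=(q_1+q_2)-p_1\langle a_l,s_{j_1}\rangle-p_2\langle a_l,s_{j_2}\rangle.
\end{equation*}
Since $s_{j_i}\in \sigma^{\vee}$, each pairing $\langle a_l,s_{j_i}\rangle$ is a nonnegative integer, and vanishes precisely when $a_l$ lies on the facet $\sigma\cap s_{j_i}^{\perp}$, i.e.\ when $a_l\in\{a_{j_i},a_{j_i+1}\}$. Because $q_1,q_2$ are bounded (by $\ell(d_{j_1}),\ell(d_{j_2})$) while $p_1,p_2$ may be taken sufficiently large, any positive pairing $\langle a_l,s_{j_i}\rangle\ge 1$ already forces the corresponding summand $p_i\langle a_l,s_{j_i}\rangle$ to exceed $q_1+q_2$. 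Hence $\langle a_l,R_1+R_2\rangle>0$ compels both $\langle a_l,s_{j_1}\rangle=0$ and $\langle a_l,s_{j_2}\rangle=0$.

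If $j_1=j_2$, this pins $a_l$ to the two-element set $\{a_{j_1},a_{j_1+1}\}$, so at most two indices remain and Lemma \ref{lem more 2} applies. If $j_1\ne j_2$, I would translate the hypothesis ``no $2$-face of $\sigma^{\vee}$ contains both $s_{j_1}$ and $s_{j_2}$'' via the standard face duality for the three-dimensional cone $\sigma$: $2$-faces of $\sigma^{\vee}$ correspond bijectively to rays $\RR_{\ge 0}\cdot a_l$ of $\sigma$, and $s_{j_i}$ lies on the $2$-face dual to $a_l$ iff $\langle a_l,s_{j_i}\rangle=0$. The hypothesis therefore reads $\{a_{j_1},a_{j_1+1}\}\cap\{a_{j_2},a_{j_2+1}\}=\emptyset$, so no $a_l$ can simultaneously satisfy both vanishing conditions and the relevant set is in fact empty. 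In either case Lemma \ref{lem more 2} yields $T^2(-R_1-R_2)=0$ and the proposition follows.

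The whole argument is essentially formal once the reduction to Lemma \ref{lem more 2} is spotted; there is no genuine obstacle. The only point deserving care is that ``sufficiently large'' $p_1,p_2$ must really be large enough for the integrality step $\langle a_l,s_{j_i}\rangle\ge 1\Rightarrow p_i\langle a_l,s_{j_i}\rangle>q_1+q_2$ to go through. Since $q_1+q_2$ is bounded by the fixed lattice lengths of the edges of $P$, this is compatible with the standing hypothesis on the $p_i$, so no additional assumption is needed.
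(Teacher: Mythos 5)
Your reduction is only valid in the case $j=k$; that half coincides with the paper's own use of Lemma \ref{lem more 2}. In the case $j\ne k$ the pivotal claim --- that every ray with $\langle a_l,s_{j_1}\rangle>0$ or $\langle a_l,s_{j_2}\rangle>0$ satisfies $\langle a_l,R_1+R_2\rangle\le 0$, so that $\#\{a_l\mid\langle a_l,R_1+R_2\rangle>0\}\le 2$ --- is not justified and is false in general. The standing hypothesis on $p_i$ is only that $R^{p_i,q_i}_{j_i}\notin\text{int}(\sigma^{\vee})$ (what is actually used is $\langle a_l,R_i\rangle\le 0$ for $l\notin\{j_i,j_i+1\}$, i.e.\ $p_i\langle a_l,s_{j_i}\rangle\ge q_i$); it does \emph{not} give $p_i\langle a_l,s_{j_i}\rangle> q_1+q_2$, and the proposition is asserted for every admissible $p_i$, so you are not free to enlarge them. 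Concretely, if $\langle a_j,s_k\rangle=1$ and $p_2=q_2$, then $\langle a_j,R_1+R_2\rangle=q_1+q_2-p_2=q_1\ge 2>0$, and in the same way all four rays $a_j,a_{j+1},a_k,a_{k+1}$ may pair positively with $R_1+R_2$. Then Lemma \ref{lem more 2} does not apply and $T^2(-R_1-R_2)$ need not vanish, so the cup product is not zero for trivial degree reasons. (This is visible in the paper's own argument, which for $j\ne k$ only records the inclusions $K^{R_1+R_2}_{a_j}\subset K^{R_1}_{a_j}$, etc., rather than claiming these sets are empty.) Your "no genuine obstacle" remark at the end glosses over exactly this point: boundedness of $q_1+q_2$ does not make the given $p_i$ large enough.

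What the case $j\ne k$ actually requires is a computation of the product at the cochain level, which is how the paper proceeds via Construction \ref{cons 1} and Remark \ref{rem th}. Non-adjacency of $s_j$ and $s_k$ (your face-duality translation $\{a_j,a_{j+1}\}\cap\{a_k,a_{k+1}\}=\emptyset$ is correct) gives $\langle a_i,R_2\rangle\le 0$ for $i\in\{j,j+1\}$ and $\langle a_i,R_1\rangle\le 0$ for $i\in\{k,k+1\}$, hence $K^{R_1+R_2}_{a_i}\subset K^{R_1}_{a_i}$ resp.\ $K^{R_2}_{a_i}$ (and $K^{R_1+R_2}_{a_i}=\emptyset$ for the remaining rays). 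This forces $\xi^0_i=\xi^0$ and $\mu^0_i=\mu^0$ on the relevant sets, so no correction terms $F_i$ are needed and the cup product is represented by $\delta(h_1,\dots,h_N)$; each $h_i$ vanishes because no ray pairs positively with both $R_1$ and $R_2$, so one of $\xi_i,\mu_i$ can be taken identically zero. If you want to keep your shortcut, you would have to add the (stronger, nonstandard) hypothesis $p_i>q_1+q_2$ for all relevant pairings, which would prove a strictly weaker statement than the proposition.
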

\begin{proof}
We will use computations obtained in Section 3 (see Construction \ref{cons 1} and Remark \ref{rem th}).
Let $\xi\in H^1_{(1)}(\Lambda\setminus \Lambda(R_1);k)$ and $\mu\in H^1_{(1)}(\Lambda\setminus \Lambda(R_2);k)$ represent basis elements for $T^1(-R_1)$ and $T^1(-R_2)$, respectively. Note that $\lan a_j,R_1\ran$, $\lan a_{j+1},R_1\ran>0$ and $\lan a_i,R_1\ran\leq 0$ for $i\ne j,j+1$. Similarly, $\lan a_k,R_2\ran$, $\lan a_{k+1},R_2\ran>0$ and $\lan a_i,R_2\ran\leq 0$ for $i\ne k,k+1$.
If $j=k$, then the statement follows from Lemma \ref{lem more 2}.
Otherwise, we know by the assumption that it holds
 $\lan a_j,R_1+R_2\ran\leq \lan a_j,R_1\ran$ 
and thus $K^{R_1+R_2}_{a_j}\subset K^{R_1}_{a_j}$ holds. By the assumptions we also have 
$$
K^{R_1+R_2}_{a_{j+1}}\subset K^{R_1}_{a_{j+1}}, ~K^{R_1+R_2}_{a_{k}}\subset K^{R_2}_{a_{k}}, ~K^{R_1+R_2}_{a_{k+1}}\subset K^{R_2}_{a_{k+1}}.
$$
This implies that $\xi^0_j(\lam)=\xi^0(\lam)$ for $\lam\in K^{R_1+R_2}_{a_{j}}$, 
$\xi^0_{j+1}(\lam)=\xi^0(\lam)$ for $\lam\in K^{R_1+R_2}_{a_{j+1}}$,  
$\mu^0_k(\lam)=\mu^0(\lam)$ for $\lam \in K^{R_1+R_2}_{a_k}$ and 
$\mu^0_{k+1}(\lam)=\mu^0(\lam)$ for $\lam \in K^{R_1+R_2}_{a_{k+1}}$.
By Construction \ref{cons 1} and Remark \ref{rem th} then follows that the cup product is equal to $\delta(h_1,...,h_N)\in \oplus_{j=1}^NH^1_{(1)}(K_{j,j+1}^{R_1+R_2};k)$, which is clearly equal to zero (since $h_j=0$ for all $j$).
\end{proof}

The following example shows that we can also compute the cup product between the elements of degrees $R_1:=R^{p_1,q_1}_j$ and $R_2:=R^{p_2,q_2}_{j+1}$. 

\begin{example}
A typical example of a non-isolated, three dimensional toric Gorenstein singularity is the affine cone $X_{\sigma}$ over the weighted projective space $\PP(1,2,3)$. The cone $\sigma$ is given by 
$\sigma=\lan a_1,a_2,a_3\ran$, where 
$$a_1=(-1,-1,1),~~~ a_2=(2,-1,1),~~~a_3=(-1,1,1).$$
We obtain $\sigma^{\vee}=\lan s_1,s_2,s_3 \ran$ with 
$$s_1=(0,1,1),~~~s_2=(-2,-3,1),~~~s_3=(1,0,1).$$
$H^2_{(1)}$ is nonzero in degrees $R^1_{\alpha}:=2R^*-\alpha s_3$, $R^2_{\beta}:=2R^*-\beta s_1$ and $R^3_{\gamma}:=2R^*-\gamma s_1$ with $\alpha\geq 1$, $\beta\geq 1$ and $\gamma \geq 2$.
Let us denote the corresponding basis element of $R^1_{\alpha}$, $R^2_{\beta}$ and $R^3_{\gamma}$ by $z^1_{\alpha}$, $z^2_{\beta}$ and $z^3_{\gamma}$, respectively.

We have 
$$\lan a_1,R^1_{\alpha}\ran=\lan a_3,R^1_{\alpha}\ran=2,~~~ \lan a_2,R^1_{\alpha}\ran=2-3\alpha,$$
$$\lan a_1,R^2_{\beta}\ran=\lan a_2,R^2_{\beta}\ran=2,~~~ \lan a_3,R^2_{\beta}\ran=2-2\beta,$$
$$\lan a_1,R^3_{\gamma}\ran=\lan a_2,R^3_{\gamma}\ran=3,~~~ \lan a_3,R^3_{\gamma}\ran=3-2\gamma.$$
From Lemma \ref{lem more 2} we know that the only possible nonzero cup products can be $z^1_1\cup z^2_1$ and $z^1_1\cup z^3_2$, since in other cases we have $T^2(R^i_j+R^k_l)=0$. 
Using computations in Section 3 (more precisely using Construction \ref{cons 1} and Remark \ref{rem th}) we can easily verify that it indeed holds $z^1_1\cup z^2_1\ne 0$ and $z^1_1\cup  z^3_2\ne 0$. In this case the equations $z^1_1\cdot z^2_1=z^1_1\cdot z^3_2=0$ are already the  generalized (infinite dimensional) versal base space. J. Stevens checked this using the computer algebra system Macaulay, see \cite[Section 5.2]{alt}. 
\end{example}

\section{The Gerstenhaber product $\HH^2(k[\Lambda])\times \HH^2(k[\Lambda])\to \HH^3(k[\Lambda])$}

Recall from the discussion after Lemma \ref{lem ger pr} that the only remaining missing part for understanding the Gerstenhaber product $\HH^2(k[\Lambda])\times \HH^2(k[\Lambda])\to \HH^3(k[\Lambda])$ is the product $H^2_{(1)}(k[\Lambda])\times H^2_{(2)}(k[\Lambda])\to H^3_{(2)}(k[\Lambda])$. 
We will analyse it in this section.

As before let $R,S\in M$.
Every element from $H^2_{(1)}(\Lambda,\Lambda\setminus \Lambda(S))$ can be written as $d\mu^0$ for some $$\mu\in H_{(1)}^1(\Lambda\setminus \Lambda(S);k).$$
By \cite[Proposition 4.7]{fil} we know that $H^2_{(2)}(\Lambda,\Lambda\setminus \Lambda(R))$ is isomorphic to the space of multi-additive, skew-symmetric functions $f:\Lambda\times \Lambda\to \Lambda$, such that $f(\lam_1,\lam_2)=0$ if $\lam_1+\lam_2\in \Lambda\setminus \Lambda(R)$. 
\begin{remark}
Multi-additivity means that $f(a+b,c)=f(a,c)+f(b,c)$ and $f(a,b+c)=f(a,b)+f(a,c)$ hold for all $a,b,c\in \Lambda$.
\end{remark}

From Lemma \ref{lem ger pr} recall the description of the Gerstenhaber product.

\begin{proposition}\label{prop com notcom}
Let $\mu\in H^1_{(1)}(\Lambda\setminus \Lambda(S);k)$ and $\xi\in H^2_{(2)}(\Lambda,\Lambda\setminus\Lambda(R);k)$. Let
$$B(\lam_1,\lam_2):=B_1(\lam_1,\lam_2)-B_2(\lam_1,\lam_2)\in C^2_{(2)}(\Lambda;k),$$ where 
$$B_1(\lam_1,\lam_2):=\xi(-S+\lam_1+\lam_2,\lam_2)\mu^0(\lam_1)+\xi(\lam_1,-S+\lam_1+\lam_2)\mu^0(\lam_2),$$
$$B_2(\lam_1,\lam_2):=\xi(\lam_1,\lam_2)\mu^0(\lam_1+\lam_2-R).$$
Let $\lam_{123}:=\lam_1+\lam_2+\lam_3$.

\begin{enumerate}

\item If $\lam_1+\lam_2\geq S$, $\lam_2+\lam_3\geq S$ we have $dB(\lam_1,\lam_2,\lam_3)=[\xi,d\mu^0](\lam_1,\lam_2,\lam_3)$.

\item If $\lam_1+\lam_2\not \geq S$, $\lam_2+\lam_3\geq S$ we have 
\begin{align*}
&\big(dB-[\xi,d\mu^0]\big)(\lam_1,\lam_2,\lam_3)=\\
&\mu^0(\lam_1)\big(\xi(-S+\lam_{123},\lam_2)+\xi(\lam_2,\lam_3)\big)
+\mu^0(\lam_2)\big(\xi(\lam_1,-S+\lam_{123}) -\xi(\lam_1,\lam_3)\big).
\end{align*}

\item If $\lam_1+\lam_2 \geq S$, $\lam_2+\lam_3\not\geq S$ we have 
\begin{align*}
&\big(dB-[\xi,d\mu^0]\big)(\lam_1,\lam_2,\lam_3)=\\
&\mu^0(\lam_2)\big(\xi(\lam_1,\lam_3)-\xi(-S+\lam_{123},\lam_3)\big)+
\mu^0(\lam_3)\big(\xi(-S+\lam_{123},\lam_2)-\xi(\lam_1,\lam_2)\big).
\end{align*}

\item If $\lam_1+\lam_2 \not \geq S$, $\lam_2+\lam_3\not \geq S$ we have
\begin{align*}
&\big(dB-[\xi,d\mu^0]\big )(\lam_1,\lam_2,\lam_3)= 
\mu^0(\lam_1)\big(\xi(-S+\lam_{123},\lam_2)+\xi(\lam_2,\lam_3) \big)+\\
&+\mu^0(\lam_2)\big(\xi(\lam_1,-S+\lam_{123})-\xi(-S+\lam_{123},\lam_3) \big)+\\
&+\mu^0(\lam_3)\big(\xi(-S+\lam_{123},\lam_2) -\xi(\lam_1,\lam_2) \big).
\end{align*}

\end{enumerate}

\end{proposition}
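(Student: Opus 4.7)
\emph{Proof plan.} The statement is a direct computational identity, and the four cases arise according to whether $\mu^0$ vanishes on the relevant partial sums $\lam_1+\lam_2$ and $\lam_2+\lam_3$. My plan is to first expand $dB_1$ and $dB_2$ term by term using the Hochschild differential from \eqref{def dif d}, and then exploit the bi-additivity and skew-symmetry of $\xi$ (which follow from Proposition 4.7 of \cite{fil}) to consolidate. For example,
$\xi(-S+\lam_{123},\lam_3) = \xi(-S+\lam_1+\lam_2,\lam_3)+\xi(\lam_3,\lam_3)=\xi(-S+\lam_1+\lam_2,\lam_3)$,
and repeated applications of this trick reduce $dB=dB_1-dB_2$ to a short list of terms indexed by their $\mu^0$-factor.

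Next I would expand $[\xi,d\mu^0]$ using Lemma \ref{lem ger pr} with $f=\xi$, $g=d\mu^0$, together with $d\mu^0(\alpha,\beta)=\mu^0(\alpha)+\mu^0(\beta)-\mu^0(\alpha+\beta)$. Forming the difference $dB-[\xi,d\mu^0]$ and grouping by $\mu^0$-factor, one checks that all contributions multiplied by a single $\mu^0(\lam_i)$ cancel — this is precisely the reason for the choice of the two summands of $B_1$, together with the correction term $B_2$ handling the shifted $\mu^0$. What remains is a combination of terms carrying $\mu^0(\lam_1+\lam_2)$, $\mu^0(\lam_2+\lam_3)$, or $\mu^0(\lam_{123})$.

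The case split now follows. In case (1), each of $\lam_1+\lam_2$, $\lam_2+\lam_3$, $\lam_{123}$ lies in $\Lambda(S)$, so all three of these $\mu^0$-factors vanish by the very definition of $\mu^0$ and the difference is zero. In cases (2)--(4), some of those sums lie outside $\Lambda(S)$, and there I would use the additivity of $\mu$ on $\Lambda\setminus\Lambda(S)$ — namely $\mu^0(\alpha+\beta)=\mu^0(\alpha)+\mu^0(\beta)$ whenever $\alpha,\beta,\alpha+\beta$ all stay in $\Lambda\setminus\Lambda(S)$ — to convert these factors into combinations of $\mu^0(\lam_i)$'s. A short matching calculation then reproduces the displayed right-hand side in each case; case (4) is the superposition of cases (2) and (3).

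The main obstacle is purely bookkeeping: expressions like $\xi(-S+\lam_{123},\lam_i)$ are defined only when $-S+\lam_{123}\in\Lambda$ and vanish otherwise, and one must keep track of which of the many sub-cases contribute. The cleanest organizing principle is that whenever $\xi$ is evaluated with an argument outside $\Lambda$, that term is zero by convention, while the bi-additivity and skew-symmetry of $\xi$ let us rewrite $\xi(\lam_i,-S+\lam_{123})$ as $\xi(\lam_i,-S+\lam_j+\lam_k)$ (the $\lam_i$ contribution dropping out via $\xi(\lam_i,\lam_i)=0$). Together these two facts funnel the computation into exactly the symmetric form of the right-hand sides in (2)--(4).
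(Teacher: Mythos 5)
Your overall strategy (expand $dB$ and $[\xi,d\mu^0]$ via \eqref{def dif d} and Lemma \ref{lem ger pr}, note that the $-R$-shifted $\mu^0$-terms of the bracket assemble into $-dB_2$, and compare what is left with $dB_1$ using bi-additivity and skew-symmetry of $\xi$) is the same as the paper's. However, your organizing claim is false: it is not true, independently of the case, that all contributions carrying a single factor $\mu^0(\lam_i)$ cancel so that only terms with $\mu^0(\lam_1+\lam_2)$, $\mu^0(\lam_2+\lam_3)$, $\mu^0(\lam_{123})$ survive. That cancellation uses bi-additivity at the \emph{shifted} arguments, e.g. $\xi(-S+\lam_{123},\lam_2+\lam_3)=\xi(-S+\lam_1+\lam_2,\lam_2+\lam_3)+\xi(\lam_3,\lam_2+\lam_3)$, which is legitimate only when $-S+\lam_1+\lam_2\in\Lambda$, i.e. essentially only in case (1). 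When $\lam_1+\lam_2\not\geq S$ the terms $\xi(-S+\lam_1+\lam_2,\cdot)$ vanish by convention instead of splitting additively, and it is precisely this mechanism --- not the additivity of $\mu$ on $\Lambda\setminus\Lambda(S)$ --- that produces the nonzero right-hand sides in cases (2)--(4). The same caveat already undermines your sample identity $\xi(-S+\lam_{123},\lam_3)=\xi(-S+\lam_1+\lam_2,\lam_3)$, which needs $\lam_1+\lam_2\geq S$.

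Concretely, your scheme would predict that in case (2) the difference equals $\mu^0(\lam_1+\lam_2)$ times one fixed $\xi$-expression, hence, after $\mu^0(\lam_1+\lam_2)=\mu^0(\lam_1)+\mu^0(\lam_2)$, an expression with the \emph{same} coefficient in front of $\mu^0(\lam_1)$ and $\mu^0(\lam_2)$. The stated answer has different coefficients, $\xi(-S+\lam_{123},\lam_2)+\xi(\lam_2,\lam_3)$ versus $\xi(\lam_1,-S+\lam_{123})-\xi(\lam_1,\lam_3)$, which are not equal for a general multi-additive skew-symmetric $\xi$, so the announced ``short matching calculation'' cannot close along the route you describe. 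The repair is exactly what the paper does: perform the expansion separately in each case, letting the hypotheses govern both which $\mu^0$-of-sum factors vanish or expand and which $\xi$-evaluations are zero because their shifted argument leaves $\Lambda$. The rest of your bookkeeping (skew-symmetry, $\xi(\lam,\lam)=0$, the role of $B_2$) is fine, and your remark that the right-hand side of case (4) is the sum of those of (2) and (3) is consistent with the statement but is an observation, not a proof of that case.
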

\begin{proof}

It holds that
\begin{align*}
&[\xi,d\mu^0](\lam_1,\lam_2,\lam_3)=\\
&=\mu^0(\lam_1)\big(\xi(-S+\lam_1+\lam_2,\lam_3)-\xi(\lam_2,\lam_3)\big)+\mu(\lam_2)\big(\xi(-S+\lam_1+\lam_2,\lam_3)-\xi(\lam_1,-S+\lam_2+\lam_3)\big)\\
&+\mu^0(\lam_3)\big(-\xi(\lam_1,-S+\lam_2+\lam_3)+\xi(\lam_1,\lam_2)\big)-\mu^0(\lam_1+\lam_2)\xi(-S+\lam_1+\lam_2,\lam_3)\\
&+\mu^0(\lam_2+\lam_3)\xi(\lam_1,-S+\lam_2+\lam_3)-dB_2(\lam_1,\lam_2,\lam_3),
\end{align*} 
where we used the fact that $\xi$ is multi-additive.
We will conclude the proof case by case.

\begin{enumerate}

\item $\lam_1+\lam_2\geq S$, $\lam_2+\lam_3\geq S$

We have $\mu^0(\lam_1+\lam_2)=\mu^0(\lam_2+\lam_3)=0$. Thus we compute
\begin{align*}
&dB_1(\lam_1,\lam_2,\lam_3)=\\
&=\mu^0(\lam_1)\big(\xi(-S+\lam_1+\lam_2,\lam_3)+\xi(\lam_3,\lam_2)\big)+\\
&+\mu^0(\lam_2)\big(\xi(-S+\lam_2+\lam_3,\lam_3)-\xi(\lam_1,-S+\lam_1+\lam_2)\big)+\\
&+\mu^0(\lam_3)\big(-\xi(\lam_1,-S+\lam_2+\lam_3)-\xi(\lam_2,\lam_1)\big).
\end{align*}
It holds that
\begin{align*}
&\xi(-S+\lam_2+\lam_3,\lam_3)-\xi(\lam_1,-S+\lam_1+\lam_2)=\\
&=\xi(-S+\lam_1+\lam_2+\lam_3,\lam_3)-\xi(\lam_1,\lam_3)\\
&-\xi(\lam_1,-S+\lam_1+\lam_2+\lam_3)+\xi(\lam_1,\lam_3)=\\
&=\xi(-S+\lam_1+\lam_2,\lam_3)-\xi(\lam_1,-S+\lam_2+\lam_3)
\end{align*}
and thus 
we see that in this case $dB(\lam_1,\lam_2,\lam_3)=[\xi,d\mu^0](\lam_1,\lam_2,\lam_3)$ holds.

\item $\lam_1+\lam_2\not \geq S$, $\lam_2+\lam_3\geq S$:

We have $\mu^0(\lam_2+\lam_3)=0$ and $\mu^0(\lam_1+\lam_2)=\mu^0(\lam_1)+\mu^0(\lam_2)$.  It holds that 
\begin{align*}
&dB_1(\lam_1,\lam_2,\lam_3)=\\
&=\mu^0(\lam_1)\xi(-S+\lam_1+\lam_2+\lam_3,\lam_2)+\mu^0(\lam_2)\big( \xi(-S+\lam_2+\lam_3,\lam_3)-\xi(-S+\lam_1+\lam_2+\lam_3,\lam_3) \big)+
\\&+\mu^0(\lam_3)\big( \xi(\lam_2,-S+\lam_2+\lam_3)-\xi(\lam_1+\lam_2,-S+\lam_1+\lam_2+\lam_3) \big),
\\
\\
&[\xi,d\mu](\lam_1,\lam_2,\lam_3)=\\ 
&=\mu^0(\lam_1)(-\xi(\lam_2,\lam_3))+\mu^0(\lam_2)(-\xi(\lam_1,-S+\lam_2+\lam_3))+\\
&+\mu^0(\lam_3)\big(-\xi(\lam_1,-S+\lam_2+\lam_3)+\xi(\lam_1,\lam_2)\big).
\end{align*}
If we compute $\big(dB_1-[\xi,d\mu^0]\big)(\lam_1,\lam_2,\lam_3)$ we see that the term before $\mu^0(\lam_3)$ vanishes because
\begin{align*}
&\xi(\lam_2,-S+\lam_2+\lam_3)-\xi(\lam_1+\lam_2,-S+\lam_1+\lam_2+\lam_3)=\\
&=\xi(\lam_2,-S+\lam_1+\lam_2+\lam_3)-\xi(\lam_2,\lam_1)-\xi(\lam_1+\lam_2,-S+\lam_1+\lam_2+\lam_3)=\\
&=-\xi(\lam_1,-S+\lam_2+\lam_3)+\xi(\lam_1,\lam_2).
\end{align*}
This gives us the desired result.
\end{enumerate}
Similarly we can consider the remaining cases.
\end{proof}

\begin{corollary}\label{corollary non cup product}
 The product
 $[d\mu^0,\xi]\in H^3_{(2)}(-R-S)$  is equal to the cohomological class of the element
 $$(\delta(B),d(B)-[d\mu^0,\xi])\in C^2_{(2)}(K_1^{R+S};k)\oplus C^3_{(2)}(\Lambda;k)$$
 in the total complex of the complex $C^{\kbb}_{(2)}(K^R_{\kbb};k)$ (see the equation \eqref{eq im double}). 
Note that the map $d(B)-[d\mu^0,\xi]$ has many zeros by Proposition \ref{prop com notcom} and thus it is easier to compute it. 
\end{corollary}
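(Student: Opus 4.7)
The plan is to realize $[d\mu^0,\xi]\in H^3_{(2)}(-R-S)$ as a cohomology class inside the total complex $\tot^{\kbb} C_{(2)}^{\kbb}(K^{R+S}_{\kbb};k)$, which computes $T^{1,-R-S}_{(2)}(A)$ by Proposition \ref{th 23}, and then to exhibit $B\in C^2_{(2)}(\Lambda;k)$ from Proposition \ref{prop com notcom} as the chain homotopy that passes from $[d\mu^0,\xi]$ to the asserted representative $(\delta B,\, dB-[d\mu^0,\xi])$.

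First I would view the cochain $[d\mu^0,\xi]$ at bidegree $(0,3)$ of the double complex via $C^3_{(2)}(\Lambda;k)=C^3_{(2)}(K^{R+S}_0;k)$ and verify that it is a total cocycle. The vertical Hochschild differential $d[d\mu^0,\xi]$ vanishes because the Gerstenhaber bracket of two Hochschild cocycles is again a cocycle. For the horizontal Cech-type differential $\delta$, I would use the fact that by Lemma \ref{lem ger pr} the bracket lies in the subspace $C^3_{(2)}(\Lambda,\Lambda\setminus\Lambda(R+S);k)$, so it vanishes on $S_3(\Lambda\setminus\Lambda(R+S))$; since $K^{R+S}_{a_j}\subset \Lambda\setminus\Lambda(R+S)$ for every generator, the restriction of $[d\mu^0,\xi]$ to each codimension-one piece $K^{R+S}_{a_j}$ is zero, whence $\delta[d\mu^0,\xi]=0$. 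Under the isomorphism of Proposition \ref{th 23}, the resulting total class is precisely the Gerstenhaber product on the Hochschild side.

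Next I would place $B$ at bidegree $(0,2)$ and compute its total differential, which with the standard sign convention takes the form $DB=(\delta B,\, dB)$. Then
\begin{equation*}
[d\mu^0,\xi]-DB=\bigl(-\delta B,\ [d\mu^0,\xi]-dB\bigr),
\end{equation*}
which up to an overall sign is exactly the element $(\delta B,\, dB-[d\mu^0,\xi])$ lying in $C^2_{(2)}(K^{R+S}_1;k)\oplus C^3_{(2)}(\Lambda;k)$ asserted in the statement. Since it differs from the total cocycle $[d\mu^0,\xi]$ by the total coboundary $DB$, it is automatically a total cocycle representing the same cohomology class in $H^3\big(\tot^{\kbb}C_{(2)}^{\kbb}(K^{R+S}_{\kbb};k)\big)\cong H^{3}_{(2)}(-R-S)$.

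The only delicate point is the bookkeeping of sign conventions on the total differential of the double complex, which can only affect an overall sign of the representative and not the underlying class. All the genuine computational content — exhibiting an explicit $B\in C^2_{(2)}(\Lambda;k)$ for which $dB-[d\mu^0,\xi]$ has the sparsity spelled out by the four cases — was already carried out in Proposition \ref{prop com notcom}; Corollary \ref{corollary non cup product} repackages that identity as a statement about cohomology classes in the total complex, thereby producing a tractable representative of $[d\mu^0,\xi]$ in which most of the bulk has been absorbed into the Cech term $\delta B$ and only the residual discrepancies of cases (2)-(4) of Proposition \ref{prop com notcom} survive on the $(0,3)$ piece.
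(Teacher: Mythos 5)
Your argument is correct and is essentially the paper's own (implicit) one: exactly as in Construction \ref{cons 1}, one places $[d\mu^0,\xi]$ in the $p=0$ column of the total complex of $C^{\kbb}_{(2)}(K^{R+S}_{\kbb};k)$ (it is a total cocycle since it lies in the relative complex, hence restricts to zero on each $K^{R+S}_{a_j}$), and then modifies it by the total coboundary of $B$ from Proposition \ref{prop com notcom} to obtain the representative $(\delta B,\,dB-[d\mu^0,\xi])$. The only caveat is your closing remark: an overall sign in the total-differential convention does change the sign of the resulting class (not just of the representative), but since the paper leaves these conventions unspecified and only vanishing/nonvanishing of the product is used, this is immaterial.
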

In the following we will compute it in a special case of toric surfaces and found out that the Gerstenhaber product is the zero map.

Let $X_{\sigma_n}=\spec(A_n)$ be the Gorenstein toric surface given by $g(x,y,z)=xy-z^{n+1}$. $\Lambda_n:=\sigma_n^{\vee}\cap M$ is generated by $S_1:=(0,1)$, $S_2:=(1,1)$ and $S_3:=(n+1,n)$, with the relation $S_1+S_3=(n+1)S_2$. 
We recall now from \cite[Example 3]{fil} that 
we have 
\begin{equation}\label{eq h32}
\dim_kH^2_{(1)}(-R)=\dim_kH^3_{(2)}(-R)=\left\{
\begin{array}{ll}
1& \text{ if }R=kS_2 \text{ for }2\leq k\leq n+1\\ 
0 & \text{ otherwise, }
\end{array}
\right.
 \end{equation}
 $$H^3_{(3)}(A_n)=H^3_{(1)}(A_n)=0.$$

 Let $\{\overline{d\mu^0_k}\in H^{2,-kS_2}_{(1)}(A_n)~|~2\leq k\leq n+1\}$ be a basis of $T^1_{(1)}(A_n)\cong H^2_{(1)}(A_n)$, such that  $\mu_k\in C^1_{(1)}(\Lambda\setminus \Lambda(kS_2);k)$ is defined by
$$\mu_k(\lam)=\left\{
\begin{array}{ll}
a& \text{ if }\lam=aS_3, \text{ for }a\in \NN\\ 
0 & \text{ otherwise }
\end{array}
\right.
$$
for all $2\leq k\leq n+1$.

\begin{proposition}\label{pr gor sur}
For all toric Gorenstein surfaces $\spec(A_n)$ it holds that the Gerstenhaber product  $\HH^2(A_n)\times \HH^2(A_n)\to \HH^3(A_n)$ is equal to the zero map.
\end{proposition}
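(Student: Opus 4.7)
The plan is to decompose the Gerstenhaber product along the Hodge decomposition $\HH^2(A_n)=H^2_{(1)}(A_n)\oplus H^2_{(2)}(A_n)$ and to dispose of the pieces one at a time, exploiting the strong vanishing $H^3_{(1)}(A_n)=H^3_{(3)}(A_n)=0$ from \eqref{eq h32}. This reduces $\HH^3(A_n)$ to $H^3_{(2)}(A_n)$, and, following the discussion after Lemma \ref{lem pal lem}, the Gerstenhaber product splits into three essentially distinct pieces: the Harrison cup product on $H^2_{(1)}\times H^2_{(1)}$, the mixed product $H^2_{(2)}\times H^2_{(1)}\to H^3_{(2)}$, and the product $H^2_{(2)}\times H^2_{(2)}\to \HH^3$.

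Two of these pieces vanish for formal reasons. The Harrison cup product lands in $H^3_{(1)}(A_n)=0$. For $p\in H^2_{(2)}(A_n)$, Lemma \ref{lem pal lem} gives $e_3(2)[p,p]=0$, while the remaining components of $[p,p]$ lie in $H^3_{(1)}(A_n)\oplus H^3_{(3)}(A_n)=0$, so $[p,p]=0$ in $\HH^3(A_n)$. Since the Gerstenhaber bracket of two degree-two Hochschild cochains is graded symmetric (both shifted degrees equal $1$), the polarization identity $2[p,q]=[p+q,p+q]-[p,p]-[q,q]$, valid in characteristic zero, extends this to $[p,q]=0$ for all $p,q\in H^2_{(2)}(A_n)$.

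The main work therefore concerns the mixed product $H^2_{(2)}(A_n)\times H^2_{(1)}(A_n)\to H^3_{(2)}(A_n)$. By \eqref{eq h32} both $H^2_{(1)}(A_n)$ and $H^3_{(2)}(A_n)$ are concentrated in lattice degrees $-jS_2$ with $2\le j\le n+1$, so the toric grading forces $[\xi,d\mu_k^0]=0$ unless $\xi\in H^{2,-R}_{(2)}(A_n)$ with $R=(m-k)S_2$ and $2\le m\le n+1$. First I would rule out most of these degrees: by \cite[Proposition 4.7]{fil}, $H^{2,-R}_{(2)}$ sits inside the one-dimensional space $\wedge^2 M^*$, cut out by vanishing on pairs $(\lam_1,\lam_2)$ with $\lam_1+\lam_2\notin\Lambda(R)$; testing on $(S_1,S_2)$, where the determinant is nonzero and $S_1+S_2=(1,2)$, the required condition $(1,2)-R\in\Lambda$ fails whenever $R=rS_2$ with $r\ge 2$. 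Hence $H^{2,-rS_2}_{(2)}(A_n)=0$ for $r\ge 2$, and the mixed product is automatically zero for all $m-k\ge 2$.

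What remains is the finite list of degrees $R=(m-k)S_2$ with $m-k\le 1$, where $H^{2,-R}_{(2)}(A_n)$ is one-dimensional and spanned by the restriction of the determinant form. For each such pair $(R,k)$ I would apply Corollary \ref{corollary non cup product} together with the formula for $B$ in Proposition \ref{prop com notcom} to write an explicit representative of $[\xi,d\mu_k^0]$ in the total complex of $C^{\kbb}_{(2)}(K^{mS_2}_{\kbb};k)$, and then use Proposition \ref{smooth} and the very concrete forms of $\mu_k$ and $\xi$ to produce bounding cochains that trivialise it. Constructing these bounding cochains explicitly is the main obstacle; heuristically it should succeed because $H^3_{(3)}(A_n)=0$ makes every such $\xi$ a Poisson structure, which quantizes by the main result of \cite{fil}, and the quantization data should supply exactly the bounding input required to trivialise the mixed bracket against any commutative first-order deformation $d\mu_k^0$.
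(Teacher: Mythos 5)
Your formal reductions are fine: using $H^3_{(1)}(A_n)=H^3_{(3)}(A_n)=0$ together with Lemma \ref{lem pal lem} and polarization in characteristic zero does dispose of the $(1,1)$- and $(2,2)$-pieces (indeed more cheaply than invoking quantization), and your degree bookkeeping, including the observation that $H^{2,-rS_2}_{(2)}(A_n)=0$ for $r\ge 2$ because a nonzero multiple of the determinant cannot vanish on $(S_1,S_2)$, is correct. But the proposition's actual content is the mixed product $H^{2,-R}_{(2)}\times H^{2,-kS_2}_{(1)}\to H^{3,-mS_2}_{(2)}$ in the surviving degrees $R=(m-k)S_2$ with $m-k\le 1$, where both the source and the target are genuinely nonzero, and there you stop at a plan: you acknowledge that producing the bounding cochains is ``the main obstacle'' and offer only the heuristic that quantizability of $\xi$ should supply them. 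That heuristic does not constitute an argument: quantization of $\xi$ controls data attached to $[\xi,\xi]$ (the self-bracket and its higher corrections), and gives no a priori information about the class of $[\xi,d\mu_k^0]$ against an independently chosen commutative first-order deformation. So the step on which the whole statement rests is missing.

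For comparison, the paper's proof closes exactly this gap by a direct computation rather than by finding bounding cochains: using the concrete $\mu_k$ supported on the multiples of $S_3$ together with multi-additivity and skew-symmetry of $\xi$, one checks that the correction terms in cases (2)--(4) of Proposition \ref{prop com notcom} vanish identically, i.e.\ $dB=[d\mu_k^0,\xi]$ on the nose for every $R$; and when $R+S$ is a multiple of $S_2$ (the only degrees where $H^{3}_{(2)}$ is nonzero) one sees moreover that $B_1=B_2=B=0$ on all pairs with $\lam_1+\lam_2\not\geq R+S$, so the representative $(\delta(B),\,dB-[d\mu_k^0,\xi])$ of Corollary \ref{corollary non cup product} is already zero and no trivializing cochains are needed. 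If you want to salvage your route, you should either reproduce this verification or give an independent computation of the class in the one-dimensional target $H^{3,-mS_2}_{(2)}(A_n)$; as written, the proof is incomplete precisely where the work is.
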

\begin{proof}
We choose some $\xi\in H^2_{(2)}(\Lambda,\Lambda\setminus \Lambda(R))$ for arbitrary $R\in M$.
Using Proposition \ref{prop com notcom} we will show that $dB=[d\mu^0_k,\xi]$ holds 
for all $k$.
Indeed, considering the cases (2), (3) and (4) of Proposition \ref{prop com notcom} we see that in the case (2) we need to prove that for $\lam_1+\lam_2\not\geq S$, $\lam_2+\lam_3\geq S$ it holds that
\begin{equation}\label{eq lep cup}
\mu^0(\lam_1)\big(\xi(-S+\lam_{123},\lam_2)+\xi(\lam_2,\lam_3)\big)+\mu^0(\lam_2)\big(\xi(\lam_1,-S+\lam_{123}) -\xi(\lam_1,\lam_3)\big)=0.
\end{equation}
Let $a_1,a_2$ be arbitrary natural numbers.
We choose $\lam_1=a_1S_3$, $\lam_2=a_2S_3$ and $\lam_3$ arbitrary such that $\lam_2+\lam_3\geq S$ (note that in all other cases the equation \eqref{eq lep cup} holds trivially). Now that the equation \eqref{eq lep cup} holds  follows from multi-additivity and skew-symmetry of $\xi$:
$$\mu^0(\lam_1)\big(\xi(-S+\lam_{123},\lam_2)+\xi(\lam_2,\lam_3)\big)+\mu^0(\lam_2)\big(\xi(\lam_1,-S+\lam_{123}) -\xi(\lam_1,\lam_3)\big)=$$
$$a_1(\xi(-S+\lam_2+\lam_3,\lam_2)+\xi(\lam_1,\lam_2)+\xi(\lam_2,\lam_3))+a_2(\xi(\lam_1,-S+\lam_2+\lam_3)-\xi(\lam_1,\lam_3))=$$
$$
a_1(a_2\xi(-S+\lam_2+\lam_3,S_3)+a_2\xi(S_3,\lam_3))+a_2(a_1\xi(S_3,-S+\lam_2+\lam_3)-a_1\xi(S_3,\lam_3))=0,
$$
where in the second equality we used that $\xi(\lam_1,\lam_2)=a_1a_2\xi(S_3,S_3)=0$ and in the last equality we used that $\xi(-S+\lam_2+\lam_3,S_3)=-\xi(S_3,-S+\lam_2+\lam_3)$.  

Similarly we can check the cases (3) and (4) and we obtain the claim that $dB=[d\mu^0_k,\xi]$ for all $k$.

From the description of $H^3_{(2)}(A_n)$ from the equation \eqref{eq h32}, we see that we only need to consider the cases, where $R$ is chosen such that $R+S\in \{kS_2~|~ \text{ for } 1\leq k\leq n+1\}$, since in all other cases $\HH^{3,-R-S}(A_n)=H^{3,-R-S}_{(2)}(A_n)=0$ so the Gerstenahber bracket is automatically zero.    
If $R+S\in \{kS_2~|~ \text{ for } 1\leq k\leq n+1\}$, then we easily see that $B_1(\lam_1,\lam_2)=B_2(\lam_1,\lam_2)=B(\lam_1,\lam_2)=0$ for all $(\lam_1,\lam_2)\in \Lambda\times \Lambda$ such that $\lam_1+\lam_2\not\geq R+S$. By Corollary \ref{corollary non cup product} we conclude the proof.
\end{proof}
\section*{Acknowledgements}
I would like to thank to my PhD advisor Klaus Altmann,
for his constant support and for providing clear answers to my many questions.
I am also grateful to Arne B. Sletsj\o e for useful discussions.

\end{document}